\let\doendproof\endproof
\renewcommand\endproof{~\hfill\qed\doendproof}
\begin{document}
\title{On the bend-number of planar and outerplanar graphs}
\titlerunning{planar bend number}


\author{Daniel Heldt\inst{1} \and Kolja Knauer\inst{1} \and Torsten Ueckerdt\inst{2} }

\authorrunning{Heldt, Knauer, Ueckerdt}

\institute{TU Berlin,   Berlin, Germany\\
\email{dheldt@math.tu-berlin.de}, \email{knauer@math.tu-berlin.de}\\ 
\and Charles University in Prague, Prague, Czech Republic\\
\email{ueckerdt@googlemail.com}\\
}
\maketitle

\begin{abstract}
The \emph{bend-number} $b(G)$ of a graph $G$ is the minimum $k$ such that $G$ may be represented as the edge intersection graph of a set of grid paths with at most $k$ bends. We confirm a conjecture of Biedl and Stern showing that the maximum bend-number of outerplanar graphs is~$2$. Moreover we improve the formerly known lower and upper bound for the maximum bend-number of planar graphs from $2$ and $5$ to $3$ and~$4$, respectively.
\end{abstract}

\section{Introduction}
\label{sec:int}

In 2007 Golumbic, Lipshteyn and Stern defined an \emph{EPG}\footnote{EPG stands for \emph{edge intersection graph of paths in the grid}} representation of a simple graph $G$ as an assignment of paths in the rectangular plane grid to the vertices of $G$, such that two vertices are adjacent if and only if the corresponding paths intersect in at least one grid edge, see~\cite{Gol-09}. EPG representations arise from VLSI grid layout problems~\cite{Bra-90} and as generalizations of \emph{edge-intersection graphs of paths on degree 4 trees}~\cite{Gol-08}. In the same paper Golumbic et al. show that every graph has an EPG representation and propose to restrict the number of bends per path in the representation. There has been some work related to this, see~\cite{Asi-09,Bie-10,Gol-09,Hel-10,Rie-09}.
\begin{figure}
 \centering
\psfrag{a}{\small a}
\psfrag{b}{\small b}
\psfrag{c}{\small c}
\psfrag{d}{\small d}
\psfrag{e}{\small e}
\psfrag{f}{\small f}
\psfrag{g}{\small g}
\psfrag{h}{\small h}
\psfrag{i}{\small i}
\psfrag{j}{\small j}
\psfrag{k}{\small k}
\psfrag{l}{\small l}
 \includegraphics{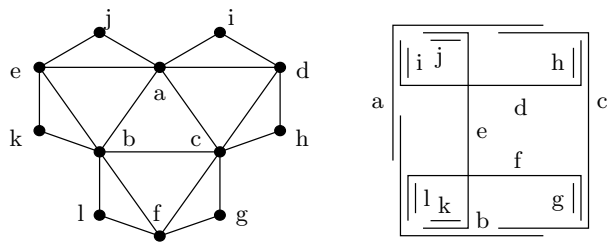}
 \caption{A $2$-bend graph and an EPG representation. If a grid edge is shared by several paths we draw them close to each other.}
 \label{fig:out-tight}
\end{figure}
A graph is a \emph{$k$-bend graph} if it has an EPG representation, where each path has at most $k$ bends. 
The \emph{bend-number} $b(G)$ of $G$ is the minimum $k$, such that $G$ is a $k$-bend graph.

%

Note that the class of $0$-bend graphs coincides with the well-known class of interval graphs, i.e., intersection graphs of intervals on a real line. 
It is thus natural to view $k$-bend graphs as an extension of the concept of interval graphs and the bend-number as a measure of how far a graph is from being an interval-graph. 

Intersection graphs of systems of intervals together with a parameter counting how many intervals are needed to represent a vertex of a given graph $G$ have received some attention. Popular examples are the \emph{interval-number}, see Harary and Trotter~\cite{Har-79} and the track-number, see Gy{\'a}rf{\'a}s and West~\cite{Gya-95}. Extremal questions for these parameters like \textit{`What is the maximum interval-/track-number among all graphs of a particular graph class?'} have been of strong interest in the literature. Scheinermann and West~\cite{Sch-83} show that the maximum interval-number of outerplanar graphs is $2$ and of planar graphs is $3$. Kostochka and West~\cite{Kos-99} prove that the maximum track-number of outerplanar graphs is $2$ and Gon{\c{c}}alves and Ochem~\cite{Gon-05} prove that the maximum track-number of planar graphs is $4$.

In~\cite{Bie-10} Biedl and Stern show that outerplanar graphs are $3$-bend graphs and provide an outerplanar graph which has bend-number $2$, see Fig.~\ref{fig:out-tight}. They conjecture that all outerplanar graphs are $2$-bend graphs. We confirm this conjecture in Theorem~\ref{bendThm:tw2-up}, showing the stronger result that graphs of treewidth at most $2$ are $2$-bend graphs.

The major part of this paper is devoted to planar graphs. Biedl and Stern~\cite{Bie-10} show that planar graphs are $5$-bend graphs but the only lower bound that they have is $2$, given by the graph of Fig.~\ref{fig:out-tight}. In Proposition~\ref{bendLem:stw3-low} we provide a planar graph of treewidth $3$ which has bend-number $3$, thus improving the lower bound of the class of planar graphs by one. Indeed in Theorem~\ref{bendThm:stw3-up} we show that every planar graph with treewidth $3$ is a $3$-bend graph. The main result of this article is Theorem~\ref{bendThm:pla-up}. We improve the upper bound for the bend-number of general planar graphs from $5$ to $4$.

\section{Preliminaries}
We consider simple undirected graphs $G$ with vertex set $V(G)$ as well as edge set $E(G)$. 
An EPG representation is a set of finite paths $\{P(u) \mid u \in V(G)\}$, which consist of consecutive edges of the rectangular grid in the plane. These paths are not self-intersecting (in grid edges), and $P(u) \cap P(v) \neq \emptyset$ if and only if $\{u,v\} \in E(G)$, i.e., only intersections at grid \emph{edges} are considered.
A bend of $P(u)$ is either a horizontal grid edge followed by a vertical one or a vertical edge followed by a horizontal one in $P(u)$. 

Let us now introduce some terminology: 
The grid edges between two consecutive bends (or the first (last) bend and the start (end) of $P(u)$) are called \emph{segments}. So
 a $k$-bend path consists of $k+1$ segments, each of which is either horizontal or vertical. A \emph{sub-segment} is a connected subset of a segment. In an EPG representation a set of sub-segments (eventually from different segments) is called a \emph{part}. 

Furthermore,  a vertex $u$ is \emph{displayed}, if there is at least one grid edge, which is exclusively in $P(u)$ and in no other path. We then say that this grid edge is a \emph{private} edge of $P(u)$. An edge $\{u,v\}\in E$ is \emph{displayed}, if there is at least one grid edge in $P(u) \cap P(v)$, which is only contained in $P(u)\cap P(v)$ and not element of any other path. This grid edge is then called a \emph{private} edge of $P(u)\cap P(v)$.
We also say that a part (or sub-segment) \emph{displays} the corresponding vertex or edge if it consists only of grid edges which are private edges of the respective vertex or edge.

Finally, two horizontal sub-segments in an EPG representation \emph{see each other} if there is a vertical grid line crossing both sub-segments. Similarly, two vertical sub-segments see each other if there is a horizontal grid line crossing both.

\section{EPG representations of graphs in terms of treewidth}
In this section we consider graphs of bounded treewidth. Therefore we denote the treewidth of a graph $G$ with $tw(G)$ and make use of the fact, that every graph $G$ with $tw(G)\leq k$ is a subgraph of a $k$-tree as well as the fact, that every $k$-tree admits a construction sequence, starting with a $(k+1)$-clique and iteratively stacking a new vertices into $k$--cliques. For further definitions and more about construction sequences of graphs of bounded treewidth we refer to~\cite{Bod-98}.
\begin{theorem}\label{bendThm:tw2-up}
 For every graph $G$ with $tw(G) \leq 2$ we have $b(G) \leq 2$.
\end{theorem}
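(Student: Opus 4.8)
The plan is to reduce the statement to $2$-trees and then build an EPG representation incrementally along a construction sequence. Since $tw(G)\le 2$, the graph $G$ is a spanning subgraph of some $2$-tree $H$ (on at least three vertices; the finitely many smaller graphs are interval graphs and hence $0$-bend). I will not represent $H$ itself, but rather follow its construction sequence while realizing only those intersections that correspond to edges actually present in $G$. Concretely, $H$ is obtained from a triangle by repeatedly stacking a new vertex $v$ onto an existing edge $\{u,w\}$, and each such step introduces exactly the two potential edges $\{v,u\}$ and $\{v,w\}$. Hence it suffices to maintain, throughout the construction, a $2$-bend EPG representation of the subgraph of $G$ induced on the current vertex set, together with an auxiliary structure that lets me attach the next vertex and realize any desired subset of its two potential edges.

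The auxiliary structure I would maintain is a \emph{free elbow} for every edge $\{u,w\}$ of the current $2$-tree $H$ (whether or not it lies in $G$): a grid point $p$ at which $P(u)$ exclusively occupies one of the two grid edges incident to $p$ and $P(w)$ exclusively occupies a perpendicular grid edge incident to $p$, with the opposite quadrant at $p$ free of all paths. The point of an elbow is that a single path may run along $P(u)$'s edge, bend at $p$, and continue along $P(w)$'s edge; such a path meets $P(u)$ and $P(w)$ and nothing else, and by choosing which of the two edges it actually covers I can realize $\{v,u\}$, or $\{v,w\}$, or both, or neither. For the base case I would draw the triangle as three short paths around a single grid cell, so that all three of its edges come equipped with a free elbow.

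First I would make the attachment step precise. Given the free elbow of $\{u,w\}$ at $p$, I route the new path $P(v)$ so that it covers (parts of) the two perpendicular edges at $p$ as dictated by $G$, which already uses at most one bend. I would then spend the second available bend to extend $P(v)$ into the free quadrant, giving $P(v)$ a private edge of its own (so that $v$ is displayed) and, more importantly, carving out of the surrounding free space two fresh free elbows, one for $\{v,u\}$ and one for $\{v,w\}$. Simultaneously I must restore a free elbow for $\{u,w\}$ itself, since an edge of a $2$-tree may be stacked onto arbitrarily often; because $P(u)$ and $P(w)$ can be prolonged into untouched rows and columns of the infinite grid, a fresh elbow for $\{u,w\}$ can always be produced next to the old one.

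The hard part will be exactly this self-reproduction: verifying that one $2$-bend path can simultaneously connect to the prescribed neighbours, acquire a private edge, and create the three replacement free elbows, and that all of this fits into a region disjoint from every previously used part of the drawing, so that no spurious intersections appear. I would handle non-interference by processing the construction sequence so that each stacking operates in its own block of fresh rows and columns, which is possible because the dependency structure of a $2$-tree is itself tree-like; the two bends are then spent as one $L$-shaped bend to make the connection at the elbow and one further bend to turn away into private territory where the three new elbows are installed. Once the attachment step is shown to preserve the invariant, induction along the construction sequence yields a $2$-bend representation of $G$, giving $b(G)\le 2$.
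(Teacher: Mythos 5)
Your overall plan (reduce to a $2$-tree, follow its construction sequence, and maintain a reusable attachment gadget for every edge of the $2$-tree) is the same as the paper's, but the gadget you chose cannot be maintained, and the step you yourself flag as ``the hard part'' --- self-reproduction of elbows --- is in fact impossible. A free elbow for $\{u,w\}$ is a \emph{point} where an exclusive edge of $P(u)$ meets an exclusive edge of $P(w)$ perpendicularly. Two $2$-bend paths have at most three segments each, so the supporting lines of segments of $P(u)$ and of $P(w)$ cross in at most nine points, and these are the only places an elbow for $\{u,w\}$ can ever exist. Prolonging a path only lengthens a segment along its supporting line, so it never creates a new crossing, and you cannot add bends to already-placed paths. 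Since each stacking permanently consumes the elbow's two grid edges (any later path stacked onto $\{u,w\}$ that reused them would share a grid edge with the earlier stacked vertex's path, a spurious adjacency, and a $2$-bend path cannot ``go around'' the blocked corner to reach both the remaining exclusive row piece and the remaining exclusive column piece), only a bounded number of vertices --- at most two per crossing point, so at most $18$ --- can ever be stacked onto a single $2$-tree edge. But a $2$-tree may stack arbitrarily many vertices onto one edge: take $G$ to be the graph obtained by stacking $n$ vertices onto the edge $\{u,w\}$ (which contains $K_{2,n}$); one can check that \emph{every} $2$-tree containing this $G$ forces all but at most four of these vertices to be children of the edge $\{u,w\}$, so no clever choice of $H$ or of the construction order avoids the problem. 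For large $n$ the invariant collapses and the induction breaks.

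There is a second, independent failure: even a single attachment realizing both edges cannot spawn both required new elbows. If $P(v)$ realizes $\{v,u\}$ and $\{v,w\}$ at the elbow, then one of its segments lies on the row of $P(u)$'s exclusive piece and one on the column of $P(w)$'s exclusive piece; since the three segments of a $2$-bend path alternate directions, these two must be consecutive, bending exactly at the elbow point, leaving only one spare segment. An elbow for $\{v,u\}$ needs an exclusive \emph{vertical} piece of $P(v)$ touching $P(u)$'s exclusive horizontal piece, while an elbow for $\{v,w\}$ needs an exclusive \emph{horizontal} piece of $P(v)$ touching $P(w)$'s exclusive vertical piece; the single spare segment can provide one or the other, never both. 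Both failures point to the same moral, which is precisely how the paper's proof of Theorem~\ref{bendThm:tw2-up} is organized: the per-edge invariant must be a one-dimensional, subdividable resource rather than a point contact. The paper maintains, for each $2$-tree edge, either two parallel displaying sub-segments that see each other, or an L-shaped configuration in which the edge itself is displayed along an overlap; each insertion consumes only a short piece of these sub-segments, leaving the rest for arbitrarily many later stackings, and the two configuration types reproduce one another (an insertion into a ``seeing'' configuration creates ``overlap'' configurations for the new edges). Replacing your elbow by such configurations is not a local repair but a redesign of the invariant.
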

\begin{proof}
 Let $\tilde{G}$ be the $2$-tree which contains $G$ and $(v_1,\ldots,v_n)$ be a vertex ordering of $G$ implied by $\tilde{G}$'s construction sequence. We construct a $2$-bend representation of $G$ along the building sequence $G_2 \subset \ldots \subset G_n = G$, where we add vertex $v_i$ to $G_i$, such that the two neighbors of $v_i$ in $\tilde{G}_i$ form a $2$-clique in $\tilde{G}_i$ (and not necessarily in $G_i$). We maintain that $\Gamma_i$ is a $2$-bend representation of $G_i$, such that every $2$-clique in $\tilde{G}_i$ satisfies one of the two invariants in Fig.~\ref{bendFig:tw2-up}~\textbf{a)}, i.e., for $i = 2,\ldots,n$ and $\{u,v\} \in E(\tilde{G}_i)$ we have a sub-segment $p_u \subseteq P(u)$ and a part $p_v \subseteq P(v)$ such that one of the following sets of conditions holds:

 \begin{enumerate}[label=(\roman*)]
\item $p_v$ is a sub-segment, $p_u$ and $p_v$ see each other, $p_u$ displays $u$, and $p_v$ displays $v$ as depicted in the top row of Fig.~\ref{bendFig:tw2-up}~\textbf{a)},

 \item $p_v$ consists of two consecutive sub-segments $p_{v,1}, p_{v,2}$ such that there is a bend between them, 
 $p_{v,2}$ displays $v$,  
 $p_u \setminus p_{v,1}$ displays $u$,  and
 $p_u \cap p_{v,1}$ displays $\{u,v\}$ as depicted in the bottom row of Fig.~\ref{bendFig:tw2-up}~\textbf{a)}.
 \end{enumerate}

 \begin{figure}[htb]
  \centering
  \psfrag{1}[bc][bl]{$u$}
  \psfrag{2}[bc][bl]{$v$}
  \psfrag{a}[bc][bl]{\textbf{a)}}
  \psfrag{b}[bc][bl]{\textbf{b)}}
  \psfrag{c}[bc][bl]{\textbf{c)}}
  \psfrag{d}[bc][bl]{\textbf{d)}}
  \psfrag{x}[bc][bl]{$v_i$}
  \psfrag{>}[bc][b]{$\longrightarrow$}
  \psfrag{<}[bc][b]{$\longleftarrow$}
  \includegraphics{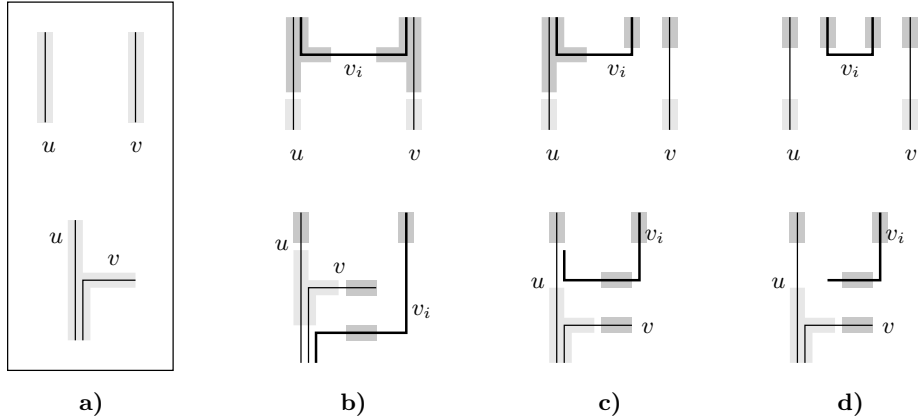}
  \caption{Invariants for a $2$-clique and insertion rules for the path for the new vertex $v_i$ (drawn bold). \textbf{a)} $p_u$ and $p_v$ for the two types of the invariant. \textbf{b)} Vertex $v_i$ has an edge with both, $u$ and $v$. \textbf{c)}: Vertex $v_i$ has an edge with $u$ and no edge with $v$. \textbf{d)}: Vertex $v_i$ has no edge with $u$ or $v$.}
  \label{bendFig:tw2-up}
 \end{figure}

 The two possible starting representations for $G_2$ are shown in Fig.~\ref{bendFig:tw2-up}~\textbf{a)}.
 Now for $i \geq 2$, let $\Gamma_i$ be a $2$-bend representation of $G_i$ that satisfies our invariant.
 
 Let $\{u,v\}$ be the $2$-clique that is the neighborhood of $v_i$ in $\tilde{G}_i$. If $v_i$ has an edge in $G_{i+1}$ with both, $u$ and $v$, we introduce the path for $v_i$ as illustrated in Fig.~\ref{bendFig:tw2-up}~\textbf{b)} depending on the type of the invariant for $\{u,v\}$. The parts, displaying the new $2$-cliques $\{u,v_i\}$ and $\{v,v_i\}$ in $\tilde{G}_{i+1}$, are highlighted in dark gray and the part, which displays  $\{u,v\}$, is highlighted in light gray.

 If $v_i$ has an edge in $G_{i+1}$ with $u$ and no edge in $G_{i+1}$ with $v$, we introduce the path for $v_i$ as illustrated in Fig.~\ref{bendFig:tw2-up}~\textbf{c)} depending on the type of invariant we have. Again the parts, which display the new $2$-cliques $\{u,v_i\}$ and $\{v,v_i\}$, and the old $2$-clique $\{u,v\}$ in $\tilde{G}_{i+1}$ are highlighted in dark gray, and light gray, respectively. If $v_i$ has an edge in $G_{i+1}$ with $v$ and no edge in $G_{i+1}$ with $u$, the roles of $u$ and $v$ are basically exchanged. In case, $v_i$ neither has an edge in $G_{i+1}$ with $u$ nor with $v$, we introduce the path for $v_i$ as illustrated in Fig.~\ref{bendFig:tw2-up}~\textbf{d)} depending on the type of type of invariant for $\{u,v\}$.
\end{proof}

Since every outerplanar graph $G$ has a treewidth $\leq 2$ we obtain the following corollary.
\begin{corollary}
 For every outerplanar graph $G$ we have $b(G) \leq 2$.
\end{corollary}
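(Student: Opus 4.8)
The plan is to reduce the statement immediately to Theorem~\ref{bendThm:tw2-up}, so that the only real work is to certify that outerplanar graphs sit inside the class to which that theorem applies. Concretely, I would argue that every outerplanar graph $G$ satisfies $tw(G) \leq 2$, and then quote Theorem~\ref{bendThm:tw2-up} verbatim to conclude $b(G) \leq 2$.

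For the treewidth bound I would use the construction-sequence viewpoint already set up in the preceding section. First I would embed $G$ in a \emph{maximal} outerplanar graph $\tilde{G}$ on the same vertex set by repeatedly adding chords to the outer polygon until every inner face is a triangle; adding edges cannot decrease treewidth, so it suffices to bound $tw(\tilde{G})$. Such a triangulated polygon admits an \emph{ear decomposition}: it has a vertex of degree $2$ whose two neighbors are adjacent, and deleting it leaves a smaller triangulated polygon. Reading this decomposition backwards produces exactly a $2$-tree construction sequence --- start from a triangle and repeatedly stack a new vertex into the $2$-clique formed by the two neighbors of the ear. Hence $\tilde{G}$ is a $2$-tree, so $tw(\tilde{G}) \leq 2$ and therefore $tw(G) \leq 2$.

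With the treewidth bound in hand, the corollary is immediate. I do not expect any genuine obstacle here: all of the combinatorial difficulty --- maintaining the two-type invariant of Fig.~\ref{bendFig:tw2-up} through the stacking process --- is already discharged in the proof of Theorem~\ref{bendThm:tw2-up}. The only point that deserves a sentence of care is that the vertex ordering used for $G$ is the one induced by the $2$-tree $\tilde{G}$ that contains it, which is precisely the hypothesis under which that theorem operates; since $G \subseteq \tilde{G}$ and $tw(G) \leq 2$ holds exactly because $\tilde{G}$ is a $2$-tree, the two statements line up without further adjustment.
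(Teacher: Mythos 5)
Your proposal is correct and is essentially the paper's own argument: the paper derives the corollary by noting that every outerplanar graph has treewidth at most $2$ and invoking Theorem~\ref{bendThm:tw2-up}. The only difference is that you spell out a proof of the folklore treewidth bound (via completion to a maximal outerplanar graph and its $2$-tree structure), which the paper simply asserts.
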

This confirms a conjecture of Biedl and Stern~\cite{Bie-10}, who show that the graph in  Fig.~\ref{fig:out-tight} has no 1-bend representation. Therefore this bound cannot be further improved.


\begin{theorem}\label{bendThm:stw3-up}
 For every planar graph $G$ with $tw(G) \leq 3$ we have $b(G) \leq 3$.
\end{theorem}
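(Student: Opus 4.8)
The plan is to mimic the inductive strategy of Theorem~\ref{bendThm:tw2-up}, but now working along the construction sequence of a $3$-tree $\tilde{G}\supseteq G$ instead of a $2$-tree. Since $tw(G)\le 3$, we may take $\tilde{G}$ to be a $3$-tree containing $G$; crucially, because $G$ is also \emph{planar}, I expect $\tilde{G}$ (or at least the relevant stacking structure) to be constrained so that the $3$-cliques into which we stack new vertices can be laid out in a planar-compatible way in the grid. I would first fix a construction sequence starting from a $4$-clique $K_4$, giving a vertex ordering $(v_1,\dots,v_n)$ where each $v_i$ ($i\ge 4$) is stacked into a $3$-clique of $\tilde{G}_i$. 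The goal is to build a $3$-bend representation $\Gamma_i$ of $G_i$ incrementally, maintaining an invariant on each $3$-clique of $\tilde{G}_i$ that is strong enough to let us insert the path of the next vertex $v_{i+1}$ using at most $3$ bends.

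The heart of the argument, exactly as in Theorem~\ref{bendThm:tw2-up}, is choosing the right \emph{invariant} for the active cliques. For $2$-trees the invariant controlled a single edge (a $2$-clique) via two configurations (types (i) and (ii)); here I would need an invariant describing how the three paths $P(a),P(b),P(c)$ of a $3$-clique $\{a,b,c\}$ sit relative to one another, together with designated sub-segments/parts that \emph{display} each vertex and each of the three edges, and that leave enough ``free'' grid space (seeing-each-other sub-segments) to attach the new vertex $v_{i+1}$. When $v_{i+1}$ is stacked into $\{a,b,c\}$, its path must be able to hit all three of $P(a),P(b),P(c)$ (or whichever subset corresponds to actual edges of $G$) within $3$ bends, and simultaneously re-establish the invariant on the three new triangles $\{a,b,v_{i+1}\}$, $\{b,c,v_{i+1}\}$, $\{a,c,v_{i+1}\}$. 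So the real work is a case analysis over (a) which invariant type the clique $\{a,b,c\}$ satisfies and (b) which of the three possible edges to $v_{i+1}$ are actually present in $G$, producing in each case an explicit insertion gadget (the analogue of Fig.~\ref{bendFig:tw2-up}~\textbf{b)}--\textbf{d)}).

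The step I expect to be the main obstacle is precisely the geometry of displaying \emph{three} pairwise edges of a triangle while keeping the whole path at $3$ bends and preserving room for future stacking. With a $4$-segment (i.e.\ $3$-bend) path one has limited turning budget, so the invariant must be carefully engineered so that the new path can reach three targets that are arranged ``around'' the existing clique without overshooting the bend count; this is where planarity should be exploited, since a planar $3$-tree's stacking order lets us assume the new vertex is inserted into a triangular face, so its three neighbors can be placed on the boundary of a grid region in a consistent cyclic/nested fashion rather than in an arbitrary configuration. Concretely, I would aim for an invariant that places the triangle's three displaying parts on the boundary of an empty axis-aligned rectangular region (one part per side, say), so that $v_{i+1}$ can enter the region with a comb-like $3$-bend path touching all three sides and then subdivide the region into three smaller rectangular regions realizing the new invariants. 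Verifying that at most $3$ bends always suffice across all cases, and that no unwanted intersections (false edges) are created when several paths share grid edges, will be the bulk of the routine checking.
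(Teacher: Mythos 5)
Your high-level strategy is the same as the paper's: induct along the construction sequence of a \emph{plane} $3$-tree containing $G$, maintain an invariant on the inner facial triangles, and insert each new path by a case analysis over the invariant type and over which of the three potential edges to $v_i$ actually lie in $G$. One ingredient you only ``expect'' must in fact be proved or cited: the paper invokes the theorem of El-Mallah and Colbourn~\cite{Elm-90} that every planar graph of treewidth at most $3$ is a subgraph of a \emph{plane} $3$-tree. This is not automatic from planarity plus treewidth $3$, and it is what guarantees the property your region-consuming scheme silently relies on, namely that each facial triangle receives at most one stacked vertex (once stacked into, it is no longer an inner face); for a general $3$-tree, where a $3$-clique can be stacked into arbitrarily often, no one-shot region invariant of your kind can work.

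The genuine gap is the invariant itself. Your proposal --- an empty axis-aligned rectangle per facial triangle with the three displaying parts of $P(a),P(b),P(c)$ on its sides, entered by a comb-like $3$-bend path --- cannot be maintained through the degree-$3$ insertion, which is exactly the step you flagged as the obstacle. A $3$-bend path has only four segments, alternately horizontal and vertical, so to share grid edges with parts lying on three side-lines of the rectangle it must either run along an entire side (in which case the vertex displayed on that side loses every private grid edge and cannot serve the two new triangles containing it) or contain a segment joining two opposite sides of the rectangle. In the latter case that segment separates the rectangle, and one checks in each configuration that for at least one of the three new triangles $\{a,b,v_i\}$, $\{b,c,v_i\}$, $\{a,c,v_i\}$ the surviving private parts of its two old vertices end up in different components, so no empty rectangle for that triangle can exist; avoiding this would require a touch--traverse--touch--traverse--touch pattern, i.e.\ at least five segments. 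The paper escapes precisely by using a weaker, asymmetric invariant: condition~\ref{bendEnum:displayed} gives every vertex a horizontal and a vertical private sub-segment \emph{globally} (not confined to a region per triangle), and condition~\ref{bendEnum:triangle} demands structure for only \emph{one designated pair} of each facial triangle, which may be either a displayed edge or a ``cross'' (an entire displayed segment of one path crossed by a displaying sub-segment of the other). The cross type is exactly the device for a pair whose private parts cannot share a common empty region, and the freedom to choose which pair of each new triangle carries the structure is what lets the induction close in Fig.~\ref{bendFig:stw3-up}~\textbf{b)}--\textbf{f)}. Without some such weakening of your invariant, the case analysis you outline cannot be completed.
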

\begin{proof}
By a result of El-Mallah and Colbourn~\cite{Elm-90} $G$ is a subgraph of a plane $3$-tree $\tilde{G}$. So there is a vertex ordering $(v_1,\ldots,v_n)$, such that $\tilde{G}_3$ is a triangle $\{v_1, v_2, v_3\}$ and $\tilde{G}_i$ is obtained from $\tilde{G}_{i-1}$ by connecting $v_i$ to the three vertices $u,v,w$ of a triangle bounding an inner face of $\tilde{G}_{i-1}$. The triangle $\{u,v,w\}$ is then not bounding an inner face of $\tilde{G}_i$ anymore and hence no second vertex may be attached to it. We build a $3$-bend representation of $G$ concurrently with the building sequence $G_3 \subset \cdots \subset G_n$ of $G$ w.r.t. the vertex ordering $(v_1,\ldots,v_n)$. We maintain the following invariant on the $3$-bend representation $\Gamma_i$ of $G_i$, for $i \geq 3$:

 \begin{enumerate}[label=(\alph*)]
  \item Every vertex $u\in \Gamma_i$ has a  horizontal \emph{and} a vertical sub-segment displaying $u$, and\label{bendEnum:displayed}
  \item every facial triangle $\{u,v,w\}$ of $\tilde{G}_i$ has two vertices, say $u$ and $v$, of\label{bendEnum:triangle} one of the following two types:
   \begin{enumerate}[label=(\roman*)]
    \setlength{\itemsep}{1pt}
    \setlength{\parskip}{0pt}
    \setlength{\parsep}{0pt}
    \item  there is  a sub-segment, which displays the edge $\{u,v\}$ as in the left example in Fig.~\ref{bendFig:stw3-up}~\textbf{a)},
    \item  there is an \emph{entire} segment $s$ of $P(v)$ displaying $v$ and a sub-segment of $P(u)$ displaying $u$ and crossing  $s$, see Fig.~\ref{bendFig:stw3-up}~\textbf{a)}.\label{bendEnum:cross}
   \end{enumerate}
 \end{enumerate}
 
 Moreover, we require that all the displaying parts above are pairwise disjoint and that the entire segment in every cross in~\ref{bendEnum:cross} \emph{cannot} see displaying parts from~\ref{bendEnum:displayed}. However, we need this assumption only in the case in Fig.~\ref{bendFig:stw3-up}~\textbf{c)}.

 \begin{figure}[htb]
  \centering
  \psfrag{1}[bc][bl]{$u$}
  \psfrag{2}[bc][bl]{$v$}
  \psfrag{3}[bc][bl]{$w$}
  \psfrag{x}[bc][bl]{$v_i$}
  \psfrag{A}[bc][bl]{\textbf{a)} invariants for $\{u,v\}$}
\psfrag{A2}[bc][bl]{of type (i) and (ii)}
  \psfrag{B}[bc][bl]{\textbf{b)} $\deg_{G_i}(v_i) = 3$,}
  \psfrag{B2}[bc][bl]{$\{u,v\}$ of first type}
  \psfrag{C}[bc][bl]{\textbf{c)} $\deg_{G_i}(v_i) = 3$,}
  \psfrag{C2}[bc][bl]{$\{u,v\}$ of second type}
  \psfrag{D}[bc][bl]{\textbf{d)} $\deg_{G_i}(v_i) = 2$}
  \psfrag{E}[bc][bl]{\textbf{e)} $\deg_{G_i}(v_i) = 1$}
  \psfrag{F}[bc][bl]{\textbf{f)} $\deg_{G_i}(v_i) = 0$}
  \psfrag{or}[bc][bl]{or}
  \includegraphics{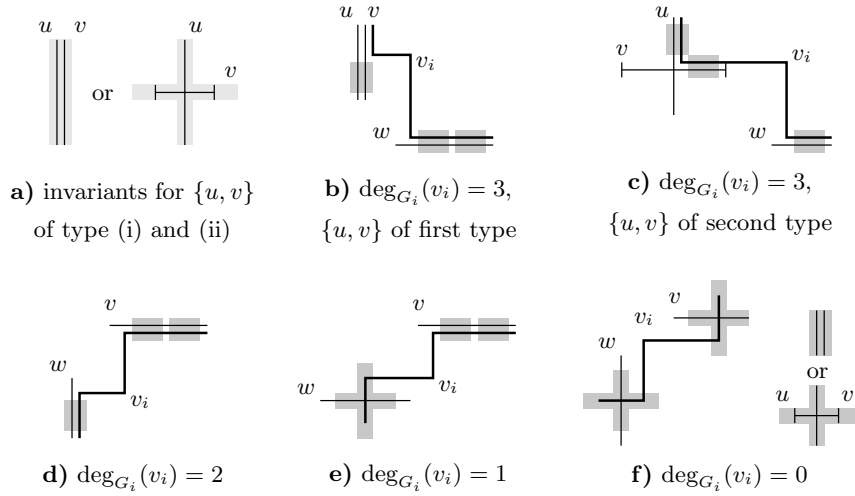}
  \caption{Building a $3$-bend representation of a planar graph with tree-width $3$, a vertex $v_i$ is attached to the facial triangle $\{u,v,w\}$ in $\tilde{G}_{i-1}$ : In \textbf{a)} the two types of invariant for $\{u,v\}$ are shown. In \textbf{b)}--\textbf{f)} it is shown how to insert the new vertex $v_i$ (drawn bold) depending on its degree in $G_i$ and the invariant of $\{u,v\}$. The invariants for the three new facial triangles $\{u,v,v_i\}$, $\{u,v_i,w\}$, and $\{v_i,v,w\}$ in $\tilde{G}_i$ are highlighted.}
  \label{bendFig:stw3-up}
 \end{figure}

 It is not difficult to find a $3$-bend representation $\Gamma_3$ of the subgraph $G_3$ of $G$, which satisfies invariant~\ref{bendEnum:displayed} and~\ref{bendEnum:triangle}. Indeed, by introducing three artificial outer vertices we can assume w.l.o.g. that $G_3$ is an independent set of size three. 
%
%

 For $i\geq 4$, the path for vertex $v_i$ is introduced to $\Gamma_{i-1}$ according to the degree of $v_i$ in $G_i$ and the type of invariant for the facial triangle $\{u,v,w\}$ in $\tilde{G}_{i-1}$ that $v_i$ is connected to. Fig.~\ref{bendFig:stw3-up}~\textbf{b)}--\textbf{f)} shows all five cases and how to introduce $v_i$, which is illustrated by the bold path. Consider in particular the case that $v_i$ has an edge with each of $u$, $v$, and $w$, and moreover the triangle $\{u,v,w\}$ has a cross, see Fig.~\ref{bendFig:stw3-up}~\textbf{c)}. Here we use that the entire $v$-segment cannot see the partial $w$-segment in order to get a horizontal displaying sub-segment for the new path corresponding to $v_i$.

 In the figure the parts from invariant~\ref{bendEnum:triangle} displaying edges of  the new facial triangles $\{u,v,v_i\}$, $\{u,v_i,w\}$ and $\{v_i,v,w\}$ in $\tilde{G}_i$ are highlighted in dark gray. Additionally, every path, including the new path for $v_i$, has a horizontal and a vertical sub-segment, which displays them. Moreover they can be chosen such that they do not see any entire segment of a cross.

 For example, consider the case that $v_i$ is not adjacent to $u$, $v$, or $w$ in $G_i$, see Fig.~\ref{bendFig:stw3-up}~\textbf{f)}. Here the new facial triangles $\{u,v_i,w\}$ and $\{v_i,v,w\}$ have a cross of an entire segment of $P(v_i)$ and a sub-segment displaying $w$ and $v$, respectively. The new facial triangle $\{u,v,v_i\}$ has the edge $\{u,v\}$, which still satisfies both parts of the invariant.
\end{proof}

We conclude this section showing that the bound in Theorem~\ref{bendThm:stw3-up} is tight. This confirms what Biedl and Stern strongly suspected in~\cite{Bie-10}.

\begin{proposition}\label{bendLem:stw3-low}
 There is a planar graph $G$ with $tw(G) = 3$ and $b(G) = 3$.
\end{proposition}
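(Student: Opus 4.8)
The plan is to exhibit an explicit planar graph of treewidth exactly $3$ and prove it has no $2$-bend representation. Since Theorem~\ref{bendThm:stw3-up} already gives $b(G)\leq 3$ for every planar graph of treewidth $3$, the entire content here is the matching lower bound $b(G)\geq 3$, i.e. constructing a witness $G$ that is \emph{not} a $2$-bend graph. The treewidth-$3$ assertion will be immediate from the construction (I would build $G$ as a subgraph of a plane $3$-tree, or verify a width-$3$ tree decomposition directly), so the real work is the non-representability argument.

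\medskip

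For the construction I would take a small planar $3$-tree (a triangulated structure) and then \emph{amplify} it so that a hypothetical $2$-bend representation is forced into a contradiction by sheer counting. The natural idea is a ``nested'' or ``stacked'' gadget: start with a base triangle and repeatedly stack vertices into faces so that many vertices must simultaneously display their mutual edges within a bounded region of the grid. Concretely, I would pick a central clique-like substructure whose edges all need private grid edges, and attach enough pendant or degree-$3$ vertices around it so that the total number of segments available from $2$-bend paths (each path has only $3$ segments) cannot accommodate all the required \emph{displayed} edges and vertices at once. The key quantitative lever is that a $k$-bend path contributes exactly $k+1$ segments; with $k=2$ each path offers three segments, of which some are consumed by the many edge-intersections a high-degree or densely-connected vertex must realize.

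\medskip

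The heart of the proof — and the main obstacle — is the local case analysis showing that no placement of $2$-bend paths realizes all adjacencies and non-adjacencies simultaneously. I would argue locally around a cleverly chosen vertex (or triangle) of high degree whose neighborhood forces conflicting demands: each neighbor's path must meet the central path in a grid edge, yet the non-neighbors must be kept disjoint, and with at most two bends there are too few distinct segment-directions to separate them all. The standard technique is to fix one path, observe that its (at most) three segments partition the plane and the intersections along them into a few combinatorial types, and then show by exhaustion that every assignment of the neighbors to these types either collapses two paths that should be non-adjacent or fails to intersect two paths that should be adjacent. To make this rigorous I expect to need a helper claim of the form: in any $2$-bend representation, around a vertex of the chosen gadget, at most a constant number of its required private edges can coexist. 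The delicate part is bookkeeping the many sub-cases (orientation of the central path's bends, left/right and above/below placement of each segment, and the interaction of the ``see each other'' relation with the grid lines), while keeping the planarity of $G$ usable as a constraint that rules out certain crossing patterns. I anticipate that the cleanest write-up handles this via a figure-driven enumeration analogous to, but inverse to, the constructive case split in Theorem~\ref{bendThm:stw3-up}.
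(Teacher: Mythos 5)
There is a genuine gap: your proposal is a strategy outline with the two decisive ingredients missing, and the one quantitative lever you do commit to is unsound. First, the witness graph is never specified, and a ``clique-like substructure with many pendant or degree-$3$ vertices amplified by counting'' cannot work as described: cliques are interval graphs (bend-number $0$), pendant vertices are absorbed for free, and a single segment of a path can carry arbitrarily many edge-intersections, so comparing the number of segments ($3$ per $2$-bend path) against the number of adjacencies to be realized can never produce a contradiction. Nothing in the definition of an EPG representation forces each edge or vertex to own a private grid edge, so the bookkeeping you propose has no valid starting point. The paper's lower bound instead rests on complete \emph{bipartite} amplification: $G$ contains nested copies of $K_{2,50}$, and the proof invokes the known facts that $b(K_{2,n})=2$ for $n\ge 5$ and that in any EPG representation of $K_{2,n}$ at most $12$ vertices of the $n$-class realize their two edges on the same or consecutive segments, together with a bound on the number of segment endpoints of the two paths of the $2$-class. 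A pigeonhole over $50$ vertices (twice: first white, then black) then forces four paths $u',v',x,y$ into a rigid aligned frame --- this forcing step is exactly what your sketch lacks and what no counting of segments alone can deliver.

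Second, even granted such a forced alignment, the contradiction does not come from a generic ``exhaustive local case analysis'' but from a purpose-built gadget: the paper suspends a specific $29$-vertex planar graph $H$ between each pair of consecutive black vertices. The analysis of $H$ (the left-to-right order of the vertical segments of $a_1,\dots,a_4$, the gray vertex $g$ whose horizontal segment is trapped between $a_1$ and $a_4$, the eight white common neighbors of $g$ and $x$, and the supporting line $\ell$) culminates in three pairwise non-adjacent white vertices all being forced to contain the part of $\ell$ between $a_3$ and $a_4$, i.e., an edge-intersection that contradicts non-adjacency in $G$. Your sketch defers precisely this to an unproven ``helper claim'' and an unspecified enumeration, with no argument for why the enumeration would be finite or why planarity enters. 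As it stands, the proposal identifies the shape of the problem (upper bound from Theorem~\ref{bendThm:stw3-up}, lower bound by a forbidden-configuration argument) but contains no proof of the lower bound.
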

The graph $G$ is depicted in the left of Fig.~\ref{fig:stw3-low} and it is constructed in the following way: Two vertices $u$ and $v$ together with $50$ white vertices form an induced $K_{2,50}$ subgraph, i.e., a complete bipartite graph on $2 + 50$ vertices. Any two consecutive white vertices together with $50$ black vertices form another induced $K_{2,50}$. Finally, between any two consecutive black vertices the graph $H$ is suspended, which is the $29$-vertex graph depicted in the right of Fig.~\ref{fig:stw3-low}. This graph is clearly planar. Furthermore one can find a planar $3$-tree containing $G$, i.e., $G$ is planar and has treewidth at most $3$.
\begin{figure}[htb]
 \centering
 \psfrag{G=}[bl][bl]{$G=$}
 \psfrag{H=}[bl][bl]{$H=$}
 \psfrag{H}[bc][bl]{$H$}
 \psfrag{x}[bc][bl]{$x$}
 \psfrag{y}[tc][tl]{$y$}
 \psfrag{u}[bc][bl]{$u$}
 \psfrag{v}[bc][bl]{$v$}
 \psfrag{m1}[tc][tl]{$50$}
 \psfrag{m2}[cr][bl]{$50$}
 \psfrag{.}[cc][bl]{$\vdots$}
 \psfrag{..}[cc][bl]{$\cdots$}
 \psfrag{1}[br][bl]{$a_1$}
 \psfrag{2}[bc][bl]{$a_2$}
 \psfrag{3}[bc][bl]{$a_3$}
 \psfrag{4}[bl][bl]{$a_4$}
 \psfrag{g}[bc][bl]{$g$}
 \psfrag{w2}[br][bl]{$w_2$}
 \psfrag{w3}[bl][bl]{$w_3$}
 \includegraphics{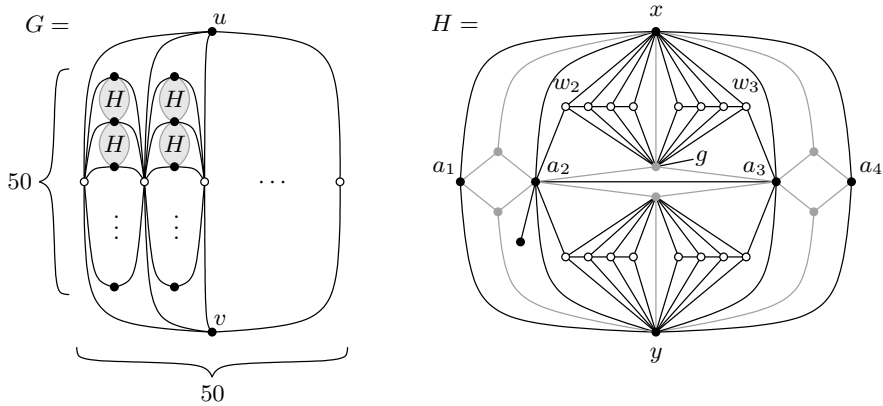}
 \caption{A planar graph $G$ with $tw(G) = 3$ and $b(G) = 3$.}
 \label{fig:stw3-low}
\end{figure}

Now we will show that $2$ bends do not suffice for an EPG representation of $G$.
\begin{proof}
 It is known~\cite{Asi-09} that $b(K_{2,n}) = 2$ if $n \geq 5$. Hence $b(G) \geq 2$. For the sake of contradiction let us assume that $b(G) = 2$ and consider a $2$-bend representation of $G$ together with the induced $2$-bend representations of all induced $K_{2,50}$ subgraphs. Furthermore, it is known~\cite{Bie-10} that in any EPG representation of $K_{2,n}$ at most $12$ vertices of the $n$-bipartition class represent their two edges using the same or consecutive segments. Moreover the $2$-bend paths of $u$ and $v$ together have at most $12$ endpoints of segments. Thus at most another $12$ white vertices contain an endpoint of $u$ or $v$ in the interior of a segment. Hence there are two consecutive white vertices, say $u'$ and $v'$, that establish their edges with $u$ and $v$ with two distinct segments of the same direction, say vertically. Both these segments are completely contained in the corresponding segment of $u$ and $v$.

 Now the same argument can be applied to the $K_{2,50}$ subgraph induced by $u'$, $v'$, and $50$ black vertices. Thus there are two consecutive black vertices, which we denote by $x$ and $y$, that establish their two edges with $u'$ and $v'$ with two distinct segments of the same direction, each completely contained in the corresponding segment of $u'$ and $v'$. Moreover, $x$ and $y$ intersect $u'$ and $v'$ on \emph{the same segment} of $u'$ and $v'$, respectively. We conclude that the paths of $u'$, $v'$, $x$, and $y$ are positioned as the black bold paths in Fig.~\ref{fig:stw3-low-detail}.

 \begin{figure}[htb]
  \centering
  \psfrag{u}[br][bl]{$u'$}
  \psfrag{v}[bl][bl]{$v'$}
  \psfrag{x}[bl][bl]{$x$}
  \psfrag{y}[tl][tl]{$y$}
  \psfrag{g}[bl][bl]{$g$}
  \psfrag{1}[bl][bl]{$a_1$}
  \psfrag{2}[bl][bl]{$a_2$}
  \psfrag{3}[bl][bl]{$a_3$}
  \psfrag{4}[bl][bl]{$a_4$}
  \psfrag{l}[bl][bl]{$\ell$}
  \includegraphics{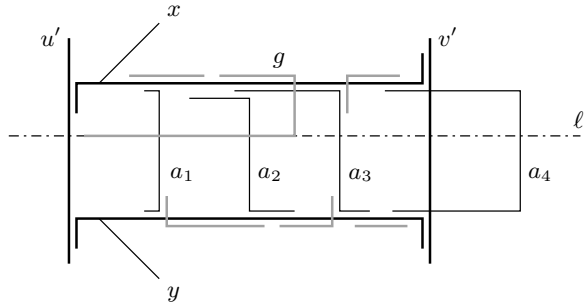}
  \caption{A detail of a hypothetical $2$-bend representation of $G$.}
  \label{fig:stw3-low-detail}
 \end{figure}

 Now consider the subgraph $H$ which is suspended between $x$ and $y$. W.l.o.g. we assume that the segments which display $x$ and $y$ are horizontal. The graph $H$ contains, aside from $x$ and $y$, a set $A$ of four black vertices denoted by $a_1$, $a_2$, $a_3$, and $a_4$, and a set $B$ of six gray vertices, see the right of Fig.~\ref{fig:stw3-low}. Every vertex in $A \cup B$ is adjacent to $x$ or $y$, but neither to $u'$ nor to $v'$. Hence each edge of such a vertex with $x$ or $y$ is established horizontally. From the adjacencies between vertices in $A \cup B$ it follows that the vertical segments of $a_1$, $a_2$, $a_3$, and $a_4$ appear in order -- w.l.o.g. from left to right. Moreover the edge $\{a_2,a_3\}$ is not established vertically since $a_2$ has a private neighbor. W.l.o.g. assume that $\{a_2,a_3\}$ is established within the $x$-segment, see Fig.~\ref{fig:stw3-low-detail}. Let us denote the gray vertex adjacent to $a_2$, $a_3$, and $x$ by $g$.

 It follows that the horizontal segment of $g$ within the $x$-segment is completely covered by $a_2$ and $a_3$ and hence lies strictly between $a_1$ and $a_4$. In consequence, the six white vertices that are adjacent to $x$ and $g$, but not to $a_2$ or $a_3$, intersect $g$ on its \emph{second horizontal} segment. Let us denote the set of eight white vertices, which are common neighbors of $g$ and $x$ by $W$ and the horizontal line supporting at least six edges between $g$ and $W$ by $\ell$. Evidently, the segment of $g$ on $\ell$ crosses only one of $\{a_1,a_4\}$, but not both, say it does \emph{not} cross $a_4$, see Fig.~\ref{fig:stw3-low-detail}.

 The vertex in $W$ that is adjacent to $a_2$, respectively $a_3$, is denoted by $w_2$, respectively $w_3$. One can check that for $i = 1,2$ the edge between $w_i$ and its white neighbor is established on $\ell$ and moreover that $w_2$ lies to the left of $w_3$ on $\ell$. This implies that the white vertices inside the triangle $\{x,g,w_3\}$ intersect the horizontal $x$-segment to the right of $a_4$. In order to establish their adjacency with $g$ all these three paths contain the part of $\ell$ between $a_3$ and $a_4$, making them pairwise adjacent, which in $G$ they are not -- a contradiction.
\end{proof}

\section{$4$-bend representation for planar graphs}

We show that the bend-number of every planar graph is at most $4$, improving the recent upper bound of $5$ due to Biedl and Stern~\cite{Bie-10}. Our proof is constructive and can indeed be seen as a linear-time algorithm to find a $4$-bend representation of any given planar graph. We use the folklore fact that every plane triangulation can be constructed from a triangle by successively glueing \hbox{$4$-connected} triangulations into inner faces. In every step our algorithm constructs a $4$-bend representation $\Gamma'$ of a $4$-connected triangulation $G'$ and incorporates it into the already defined representation. The construction of $\Gamma'$ is based on a well-known representation of subgraphs of 4-connected triangulations by touching axis-aligned rectangles. These are the basic steps of the algorithm:

 \begin{enumerate}[label=\arabic*.)]
  \item Fix some plane embedding of the given planar graph and add one vertex into each face to obtain a super-graph $G$ that is a triangulation. If we find a $4$-bend representation for $G$, removing the paths from it that correspond to added vertices, results in a $4$-bend representation of the original graph.
  \item Construct a $4$-bend representation of the outer triangle of $G$ so that invariant $I$, presented ahead, is satisfied.
  \item Let $\Gamma'$ and $G'$ denote the so far defined $4$-bend representation and the graph that is represented, respectively. The graph $G'$ will always be a plane triangulation, which is an induced subgraph of $G$. We repeat the following two steps 4.) and 5.) until we end up with a $4$-bend representation $\Gamma$ of the entire triangulation $G$.
  \item Consider a triangle $\Delta$ which is an inner face of $G'$, but not of $G$. Let $G_\Delta$ be the unique $4$-connected triangulated subgraph of $G$, which contains $\Delta$ and at least one vertex lying inside of it. (No vertex in $G_\Delta$, except those in $\Delta$, is represented in $\Gamma'$.)
  \item Construct a $4$-bend path for every vertex in $G_\Delta \backslash \Delta$ and add it to the representation $\Gamma'$ so that all edges of $G_\Delta$ are properly represented, i.e., $\Gamma'$ is a $4$-bend representation of $G' \cup G_\Delta$. Additionally, we ensure that our invariant is satisfied for every inner facial triangle of $G' \cup G_\Delta$.
 \end{enumerate}

The algorithm described above computes a $4$-bend representation for every given planar graph. Moreover, steps 1.) to 3.) can easily be executed in linear time. For an efficient implementation of step~4.) we first identify all separating triangles of $G$, which can be in linear time~\cite{ItaRod-78}. The graph $G_\Delta$ then arises from $G$ by removing all vertices in the exterior of the triangle $\Delta$ and all vertices in the interior of every remaining separating triangle different from $\Delta$. This can for example be done in $\mathcal{O}(|V(G_\Delta)|)$ if $G$ is stored with adjacency lists.

The crucial part is step 5.), i.e., the construction of a $4$-bend representation of the \hbox{$4$-connected} triangulation $G_\Delta$. Our construction relies on a well-known geometric representation of proper subgraphs of $4$-connected plane triangulations. Actually, we need the slight strengthening in Lemma~\ref{lem:box-representation} below. A \emph{plane non-separating near-triangulation}, \emph{NST} for short, is a plane graph on at least four vertices without separating triangles such that all inner faces are triangles and the outer face is a quadrangle. For example, if $G$ is a plane $4$-connected triangulation and $e = (u,w)$ an outer edge of $G$, then $G \backslash (u,w)$ is an NST.

\begin{lemma}\label{lem:box-representation}
 Let $G = (V,E)$ be an NST. Then $G$ can be represented as follows:
 \begin{enumerate}[leftmargin=3em,label=(\alph*)]
  \item There is an axis-aligned rectangle $R(v)$ for every $v \in V$.\label{enum:box-one}
  \item Any two rectangles are either disjoint or intersect on a line segment of non-zero length.\label{enum:box-two}
  \item The rectangles $R(v)$ and $R(w)$ have a non-empty intersection if and only if $(v,w)$ is an edge in $G$.\label{enum:box-three}
 \end{enumerate}
 Additionally, for every $v \in V$ there is a vertical segment $t_v$ from the bottom to the top side of $R(v)$, such that the following holds:
 \begin{enumerate}[resume,leftmargin=3em,label=\textbf{(\alph*)}]
  \item The segment $t_v$ lies to the right of $t_w$ for every $v \in V$ and $R(w)$ touching the top side of $R(v)$.\label{enum:box-five}
 \end{enumerate}
\end{lemma}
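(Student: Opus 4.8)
The plan is to start from the classical rectangular dual / touching-rectangle representation of $4$-connected triangulations and then upgrade it to obtain the extra segment condition~\ref{enum:box-five}. Recall that any plane $4$-connected triangulation admits a \emph{rectangular dual}: a partition of a rectangle into axis-aligned rectangles, one per inner vertex, so that two rectangles share a boundary segment of positive length exactly when the corresponding vertices are adjacent. For an NST $G$ one can obtain a closely related touching-rectangle representation from the structure theory of such graphs (e.g.\ via a regular edge labeling / transversal structure, or via the canonical-ordering machinery of Kant and He). I would first invoke such a representation to get parts~\ref{enum:box-one}--\ref{enum:box-three}; the disjoint-or-proper-segment-intersection property~\ref{enum:box-two} is automatic from a rectangular-dual–type construction, since in a partition of a rectangle two cells meet either not at all or along a genuine side.

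Next I would set up the combinatorial object that controls the relative positions of the rectangles, namely a \emph{transversal structure} (also called a regular edge labeling) of the NST. Such a structure orients and $2$-colors the inner edges so that around each inner vertex the edges appear in four consecutive blocks (right-out, top-out, left-in, bottom-in, say), and it is exactly this structure that can be realized by the touching rectangles with a prescribed ``who touches whose top/bottom/left/right'' pattern. The point is that the adjacency $R(w)$ touches the top side of $R(v)$ corresponds precisely to an edge colored/oriented in the ``vertical'' class from $v$ up to $w$. Fixing such a transversal structure gives a coherent global notion of which rectangles sit above which, and crucially a left-to-right order among all rectangles that touch the top of a fixed $R(v)$.

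The heart of the argument, and the step I expect to be the main obstacle, is producing the vertical through-segments $t_v$ satisfying~\ref{enum:box-five}: for each $v$ a vertical segment spanning $R(v)$ from its bottom edge to its top edge, arranged so that if $R(w)$ touches the top of $R(v)$ then $t_v$ lies to the right of $t_w$. The difficulty is that a single rectangle $R(v)$ may have several rectangles touching it from below (which constrain $t_v$ to lie to their right) and several touching it from above (which constrain $t_v$ to lie to the left of each of their segments), and these constraints must be made simultaneously consistent across the whole representation. I would handle this by processing the vertices in the order given by the vertical direction of the transversal structure (a topological order of the ``up'' orientation), and placing $t_v$ as far to the right as the rectangles above $v$ permit while staying to the right of the segments already placed in the rectangles below $v$. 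I would argue that the transversal structure guarantees the horizontal projections of the top-neighbors and bottom-neighbors of $v$ overlap in a common vertical strip inside $R(v)$, so a feasible $x$-coordinate for $t_v$ exists; choosing it consistently (e.g.\ respecting the induced left-to-right order on the top side) then yields~\ref{enum:box-five}. If a genuinely global simultaneous choice is delicate, the fallback is to slightly perturb the rectangle boundaries (stretching cells horizontally) so that the requisite strips line up, which is always possible in a rectangular dual without changing any adjacency.

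Finally I would verify that the constructed $t_v$ are honest vertical segments of positive length from bottom to top of each $R(v)$ and that the ordering condition~\ref{enum:box-five} holds for every pair $(v,w)$ with $R(w)$ touching the top of $R(v)$, completing the proof. The only properties of NSTs used beyond $4$-connectivity-type arguments are the existence of the touching-rectangle representation and of a transversal structure, both standard for near-triangulations with a quadrangular outer face; I would cite these rather than reprove them.
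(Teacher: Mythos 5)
Your reduction to the classical touching-rectangle representation for items~\ref{enum:box-one}--\ref{enum:box-three} matches the paper's first step, and you correctly isolate condition~\ref{enum:box-five} as the crux. But your main argument for~\ref{enum:box-five} has a genuine gap: you try to choose the segments $t_v$ inside a \emph{fixed} rectangle representation, processing vertices in a topological order and claiming feasibility from the transversal structure. This cannot work in general, because the constraints are global and a fixed geometry can make them infeasible. Condition~\ref{enum:box-five} forces $x(t_v) > x(t_w)$ whenever $R(w)$ touches the top of $R(v)$ (incidentally, you state both constraint directions backwards, though that is only a reflection and not the real issue). Now consider a staircase in the dual: rectangles $R(v_1), R(v_2), \ldots, R(v_k)$, each touching the top of the previous one but shifted to the right, so that $R(v_k)$ lies entirely to the right of $R(v_1)$. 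The constraints force $x(t_{v_1}) > x(t_{v_2}) > \cdots > x(t_{v_k})$, yet $t_{v_k}$ must lie inside $R(v_k)$ and hence strictly to the right of all of $R(v_1)$, giving $x(t_{v_k}) > x(t_{v_1})$ --- a contradiction. Such staircases occur in rectangular duals of NSTs, so for the given representation no valid choice of segments need exist; your local claim that the horizontal projections of top- and bottom-neighbors overlap in a common strip inside $R(v)$ only addresses local consistency and does not touch this global obstruction.

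Consequently the entire burden falls on what you call a fallback, namely perturbing the rectangles, which you assert ``is always possible'' without proof. That assertion is exactly the nontrivial content of the lemma, and it is not a slight perturbation: the representation must be deformed globally. This is precisely what the paper does, via two explicit cut-and-stretch modifications. First, for every inner rectangle $R(v)$ the plane is cut along a line through the contact of $R(v)$ with its leftmost bottom neighbor and its rightmost top neighbor and pulled apart, creating room for a vertical segment $t_v$ spanning $R(v)$ between these two contacts; this already places $t_v$ to the right of $t_w$ for every top neighbor $w$ except possibly the rightmost one. Second, for every pair $(R(v),R(w))$ with $R(w)$ the rightmost rectangle touching the top of $R(v)$, the plane is cut again along a line threaded between $t_v$ and $t_w$ and stretched until $t_w$ lies to the left of $t_v$. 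Each modification acts on the incidence combinatorics of the segments in constant time, which is also how the linear-time claim is obtained. The paper's remark does sketch a transversal-structure proof closer in spirit to your toolkit, but it works by splitting each vertex $v$ into two adjacent vertices $v'$, $v''$ and taking $t_v$ to be the common side of $R(v')$ and $R(v'')$ in the rectangle representation of the \emph{new} graph --- i.e., it changes the graph and recomputes the representation rather than selecting segments inside a fixed one, which is the step your proposal is missing.
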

\begin{proof}
 It is well-known that every NST has a representation with touching rectangles, i.e., items~\ref{enum:box-one}--\ref{enum:box-three} hold~\cite{KozKin-85,LaiLei-84,Ung-53}. Moreover, those rectangles can be computed in linear time. We take such a rectangle representation and apply two modifications to it as illustrated in the top and bottom of Fig.~\ref{fig:pla-up-distort}, respectively. The modifications operate on the incidence relation of segments in the representation, that is, the inclusion-maximal horizontal and vertical lines. However, what follows is a more easily interpreted graphic image of the situation.

 For the first modification we define for every \emph{inner} rectangle $R(v)$ a line $\ell$, like the dashed one in Fig.~\ref{fig:pla-up-distort}. To be precise, $\ell$ consists of two vertical rays, one starting at the contact between $R(v)$ and the leftmost rectangle touching the bottom side of $R(v)$, and one starting at the contact between $R(v)$ and the rightmost rectangle touching the top side of $R(v)$. Those vertical rays are connected inside $R(v)$, e.g., by a straight line segment. Then the plane is cut along $\ell$ and pulled apart to the left and right while extending every rectangle that has been cut by $\ell$. As soon as the lower ray in the right part lies to the right of the upper ray in the left part we define the segment $t_v$ to lie between those two rays.

Having applied this modification to every inner rectangle, each segment $t_v$ is spanned between the rightmost rectangle touching the top of $R(v)$ and the leftmost rectangle touching the bottom of $R(v)$, for every $v \in V$.

 \begin{figure}[htb]
  \psfrag{>}[bc][bl]{$\leadsto$}
  \psfrag{x}[cc][bc]{$R(v)$}
  \psfrag{y}[cc][bc]{$R(w)$}
  \psfrag{t}[cc][bc]{$t_v$}
  \psfrag{l}[cc][bc]{$\ell$}
  \psfrag{l'}[cc][bc]{$\ell'$}
  \centering
  \includegraphics{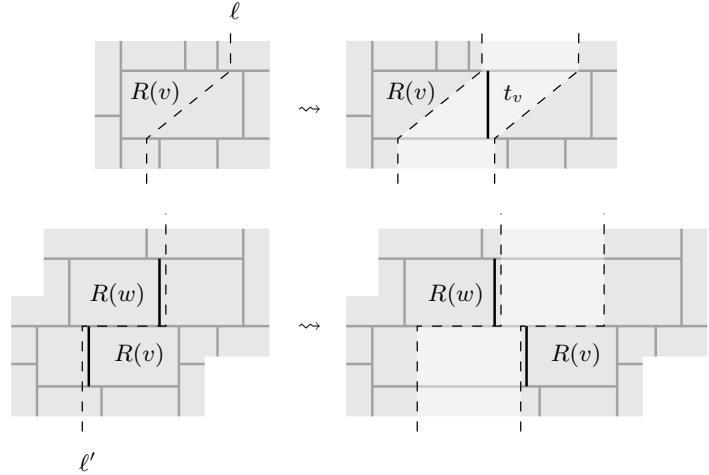}
  \caption{\textbf{Top:} The modification at an inner rectangle and the positioning of the segment $t_v$. \textbf{Bottom:} The modification at a pair $R(v)$, $R(w)$, with $R(w)$ being the rightmost rectangle touching the top of $R(v)$.}
  \label{fig:pla-up-distort}
 \end{figure}

 For the second transformation another line $\ell'$ is defined for every pair of a rectangle $R(v)$ and its rightmost rectangle $R(w)$ touching the top side of $R(v)$. Again $\ell'$ contains two rays which both start in $R(v) \cap R(w)$. The ray emerging downwards starts to the left of $t_v$ and the ray going upwards starts to the right of $t_w$. Both rays are connected by a straight line segment contained in $R(v) \cap R(w)$. Similarly to the first modification, the plane is cut along $\ell'$ and pulled apart (extending every rectangle that is cut by $\ell'$) until $t_w$ lies to the left of $t_v$.

 Having applied this second modification to every pair of a rectangle $R(v)$ and its right-most top neighbor $R(w)$, condition~\ref{enum:box-five} in Lemma~\ref{lem:box-representation} is fulfilled. Moreover, operating on the combinatorics of segments only, each modification can be done in constant time. Hence we obtain a linear-time algorithm, which completes the proof.
\end{proof}

\begin{remark}
 There is an alternative 
proof of Lemma~\ref{lem:box-representation} that, however, involves some concepts. It is known that rectangle representations are in bijection with so-called \emph{transversal structures}~\cite{Fus-07}. Given a transversal structure, one can split a vertex $v$ into two adjacent vertices $v'$ and $v''$, distribute the neighbors of $v$ among $v'$ and $v''$ to have a triangulation again, and carry over the transversal structure to the new graph. And all this can be done in such a way that in the rectangle representation corresponding to the new transversal structure the right side of $R(v')$ coincides with the left side of $R(v'')$. In other words, $R(v') \cup R(v'')$ is a rectangle with a vertical segment $t_v$. 
This attempt directly gives a linear-time algorithm to compute a rectangle representation satisfying items~\ref{enum:box-one}--\ref{enum:box-five} in Lemma~\ref{lem:box-representation}.
\end{remark}

We will refer to a set of rectangles satisfying~\ref{enum:box-one}--\ref{enum:box-five} in Lemma~\ref{lem:box-representation} as a \emph{rectangle representation of $G$}. Rectangle representations are also known as \emph{rectangular duals}. It is easily seen that every inner (triangular) face $\Delta = \{u,v,w\}$ of $G$ corresponds to a unique point in the plane given by $R(u) \cap R(v) \cap R(w)$, i.e., the common intersection of the three corresponding rectangles.

\bigskip

Now we describe our invariant mentioned in steps~2.) and~5.) above.
\subsubsection{Invariant $I$:}
Let $G'$ be a plane triangulation. A  $4$-bend representation $\Gamma'$ of $G'$ is said to \emph{satisfy invariant~$I$} if there are mutually disjoint regions, i.e., polygonal parts of the plane, associated with the inner facial triangles of $G'$.
For each inner facial triangle $\Delta=\{u,v,w\}$ we require that inside \emph{the region for $\Delta$} there are segments $s_u$, $s_v$, $s_w$, and $s_{uv}$ displaying $u,v,w$, and the edge $\{u,v\}$, respectively, such that (up to grid-symmetry) each of the following holds. 
\begin{enumerate}[leftmargin=5em,label={(\alph*)}]
 \item Segment $s_u$ is vertical, $s_v$ is horizontal, lying to the top-right of $s_u$.
 \item Segment $s_{uv}$ lies above $s_u$ on the same grid line.
 \item Segment $s_w$ lies to the right of $s_u$ and not to the left of $s_v$, and is of one of the following types
  \begin{enumerate}[leftmargin=5em,label={(\roman*)}]
   \item $s_w$ is vertical and sees $s_u$.
   \item $s_w$ is horizontal and sees $s_v$.
  \end{enumerate}
\end{enumerate}

\begin{figure}[htb]
 \psfrag{A}[bc][bl]{type (i)}
 \psfrag{B}[bc][bl]{type (ii)}
 \psfrag{u}[bc][bl]{$s_u$}
 \psfrag{v}[bc][bl]{$s_v$}
 \psfrag{w}[bc][bl]{$s_w$}
 \psfrag{o}[bc][bl]{or}
 \psfrag{uv}[bc][bl]{$s_{uv}$}
 \centering
 \includegraphics{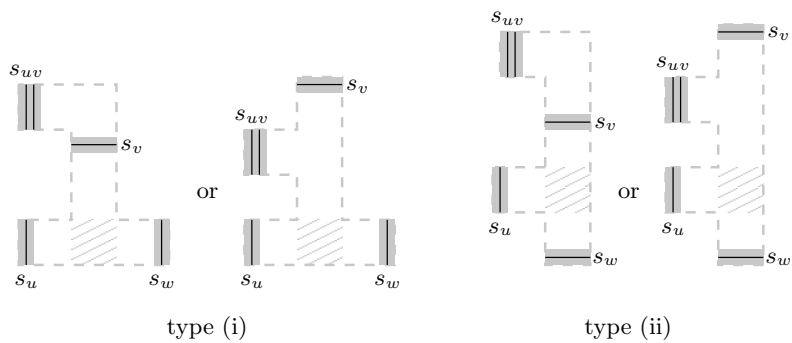}
 \caption{The regions of type (i) and type (ii) for a triangle $\Delta = \{u,v,w\}$ (up to grid-symmetry). The part of the region that can be seen by each of $s_u$, $s_v$, and $s_w$ is highlighted.}
 \label{fig:planar-invariant}
\end{figure}

See Fig.~\ref{fig:planar-invariant} for an illustration. The regions are illustrated by the dashed line in Fig.~\ref{fig:planar-invariant}. It is important that the regions in Fig.~\ref{fig:planar-invariant} may appear reflected and/or rotated by $90$ degree.

\bigskip

Our algorithm to find a $4$-bend representation of a planar graph executes steps 1.) to 5.) as described above. It is not difficult to find a $4$-bend representation $\Gamma'$ of a triangle in step 2.) which satisfies invariant $I$, see Fig.~\ref{fig:planar-base}.

\begin{figure}[htb]
\psfrag{u}[bc][bl]{$s_u$}
\psfrag{v}[bc][bl]{$s_v$}
\psfrag{w}[bc][bl]{$s_w$}
\psfrag{uv}[bc][bl]{$s_{uv}$}
\centering
\includegraphics{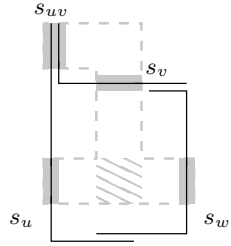}
\caption{An EPG representation of a triangle $\Delta = \{u,v,w\}$ satisfying invariant $I$ of type (i).}
\label{fig:planar-base}
\end{figure}

Hence we only have to take care of step 5.), which boils down to the following.

\begin{lemma}\label{lem:planar-step5}
 Let $\{P(u),P(v),P(w)\}$ be a $4$-bend representation of a triangle $\Delta = \{u,v,w\}$ which satisfies invariant~$I$, and $G$ be a $4$-connected triangulation whose outer triangle is $\Delta$. Then $\{P(u),P(v),P(w)\}$ can be extended to a $4$-bend representation $\Gamma$ of $G$ which satisfies invariant~$I$, such that every new $4$-bend path, as well as the region for every inner face of $G$, lies inside the region for $\Delta$. Moreover, such a representation can be found in linear time.
\end{lemma}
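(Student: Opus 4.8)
The plan is to reduce the construction entirely to the touching-rectangle representation supplied by Lemma~\ref{lem:box-representation}. Since $G$ is a $4$-connected triangulation with outer triangle $\Delta=\{u,v,w\}$, I would first delete one outer edge, say $(u,w)$, to obtain an NST $G^-$; its outer quadrangle is the $4$-cycle $u,v,w,z$, where $z$ is the inner vertex completing the face on $(u,w)$. Applying Lemma~\ref{lem:box-representation} to $G^-$ yields axis-aligned rectangles $R(x)$ together with the vertical spines $t_x$, in which $u,v,w,z$ are the four \emph{frame} rectangles, every edge of $G^-$ corresponds by item~\ref{enum:box-three} to a positive-length contact of two rectangles, and every inner face $\{a,b,c\}$ corresponds to the common point $R(a)\cap R(b)\cap R(c)$.

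Next I would turn each rectangle $R(x)$ into a $4$-bend path $P(x)$ routed essentially along the boundary of $R(x)$. The point is that two such boundary paths share a grid \emph{edge} exactly when the two rectangles meet in a segment of positive length: rectangles meeting only in a single corner point (item~\ref{enum:box-two}) share no grid edge and hence create no spurious adjacency, so by item~\ref{enum:box-three} the resulting intersections reproduce $E(G^-)$ precisely. Two kinds of material remain private to $P(x)$ and can therefore serve as displaying segments: any boundary piece of $R(x)$ lying in no contact, and the entire interior spine $t_x$, which is never shared. The four bends (five segments) are spent traversing the sides of $R(x)$ so as to realize its top-, bottom-, left- and right-contacts while keeping within the bend budget.

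To establish invariant~$I$, I would associate with each inner face $\{a,b,c\}$ a small region around the meeting point $R(a)\cap R(b)\cap R(c)$; these regions are pairwise disjoint. The local picture at such a point is a T-junction of three rectangles, and after the boundary-to-path conversion one reads off the segments $s_a,s_b,s_c,s_{ab}$ there, obtaining exactly one of the two admissible types of Fig.~\ref{fig:planar-invariant} up to the permitted reflections and rotations. The crucial ``to the right of'', ``not to the left of'' and ``see each other'' relations are precisely those guaranteed by item~\ref{enum:box-five}: $t_v$ lying to the right of $t_w$ whenever $R(w)$ touches the top of $R(v)$ is what forces the private displaying segments not to see the wrong neighbours. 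Finally I would reconcile the construction with the prescribed data at $\Delta$: the three frame paths are identified with (or extensions of) the given $P(u),P(v),P(w)$ so that $s_u,s_v,s_w,s_{uv}$ match the given invariant-$I$ configuration, the deleted edge $(u,w)$ is restored by letting the two opposite frame paths share a grid edge along the boundary, and the whole rectangle picture is scaled and placed inside the region for $\Delta$. Linear time then follows because the rectangle representation is computed in linear time by Lemma~\ref{lem:box-representation} and each rectangle is processed locally in constant time.

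The hard part will be this last reconciliation together with the simultaneous verification of invariant~$I$ at \emph{every} inner face. The rigidity of invariant~$I$ means I must check that the boundary-tracing conversion can never yield a forbidden local type, only types (i) and (ii), and that enough private boundary material survives at each T-junction to furnish disjoint displaying segments of the correct orientations — this is exactly where item~\ref{enum:box-five} and the placement of the spines $t_x$ are indispensable. The boundary cases are the most delicate: matching the generated frame to the already-fixed segments $s_u,s_v,s_w,s_{uv}$ of the incoming triangle, and re-inserting the opposite-frame contact for $(u,w)$ without violating the bend budget or disturbing a neighbouring face's region, so that the step composes cleanly with the surrounding recursion.
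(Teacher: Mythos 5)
Your high-level plan coincides with the paper's: delete an outer edge to get an NST, apply Lemma~\ref{lem:box-representation}, turn rectangles into $4$-bend paths, and read off the regions of invariant~$I$ at the triple points $R(a)\cap R(b)\cap R(c)$. But your path construction contains a genuine inconsistency. A $4$-bend path has only five segments, so it cannot both traverse (essentially) all four sides of $R(x)$ \emph{and} contain the interior spine $t_x$: covering the four sides already consumes all five segments, and the spine plus its connectors would need at least two more. You need the spine, because in a rectangle representation the inner rectangles are completely surrounded by their neighbours, so \emph{every} boundary grid edge of an inner rectangle lies in a contact; a pure boundary path would have no private grid edges whatsoever, hence no displaying segments $s_{u'},s_{v'},s_{w'}$ and no way to establish invariant~$I$. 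The paper resolves this with a snake-like path: full left side, top side only \emph{left of} $t_i$, the spine $t_i$ downwards, bottom side only \emph{right of} $t_i$, full right side. The price is that top and bottom contacts are no longer automatically realized, and this is precisely where item~\ref{enum:box-five} is spent: if $R(j)$ touches the top of $R(i)$, then $t_i$ lying to the right of $t_j$ guarantees that the covered piece of the top of $R(i)$ and the covered piece of the bottom of $R(j)$ overlap inside the contact. Your proposal misattributes item~\ref{enum:box-five} to visibility relations in the invariant; in fact it is the adjacency-saving device, and without recognizing this your claim that ``the resulting intersections reproduce $E(G^-)$ precisely'' has no proof.

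The second gap is the boundary reconciliation, which you correctly flag as ``the hard part'' but then describe in a way that cannot work. The lemma requires the given paths $P(u),P(v),P(w)$ to be kept as they are (the representation is \emph{extended} by new paths only), so the frame rectangles of the NST cannot be ``identified with or extensions of'' the given paths; also the deleted edge $(u,w)$ needs no ``restoring'' --- it is already represented by the given triangle representation, and is deleted only so that Lemma~\ref{lem:box-representation} applies. The paper's solution is different: the choice of which outer edge to delete depends on the \emph{type} of the region for $\Delta$ (edge $\{u,w\}$ for type~(i), edge $\{v,w\}$ for type~(ii)), the rectangle representation is placed inside the part of the region seen by all of $s_u,s_v,s_w$, and the edges towards $u$, $v$, $w$ are realized by shifting outermost segments of the inner snake paths onto $s_u$, $s_v$, $s_w$ --- with the common neighbour $x$ of $u$ and $v$ treated specially via $s_{uv}$, which is what produces the edge-displaying segment of the new face $\{u,v,x\}$. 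This shifting step, the explicit regions for the three faces containing two outer vertices, and the degenerate case $x=y=z$ (i.e.\ $G=K_4$) are exactly the content your sketch leaves open, so the proposal as it stands does not constitute a proof.
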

\begin{proof}
 For convenience we rotate and/or flip the entire representation such that the region for $\Delta$ is similar to one of Fig.~\ref{fig:planar-invariant}, i.e., $s_u$ and $s_{uv}$ are the leftmost segments and $s_{uv}$ lies above $s_u$. Define $G' = G \setminus \{u,w\}$ if the region for $\Delta$ is of type~(i), and $G' = G \setminus \{v,w\}$ if the region for $\Delta$ is of type~(ii). By Lemma~\ref{lem:box-representation} $G'$ can be represented by rectangles so that conditions~\ref{enum:box-one}--\ref{enum:box-five} are met. We put this rectangle representation into the part of the region for $\Delta$ which can be seen by each of $s_u$, $s_v$, and $s_w$. This part is highlighted in Fig.~\ref{fig:planar-invariant}.

 Now we explain how to replace the rectangle by a path for every vertex $i \neq u,v,w$ in $G'$. We start with a snake-like $4$-bend path $P(i)$ within the rectangle corresponding to~$i$. To be precise, $P(i)$ starts at the bottom-left corner of $R(i)$, goes up to the top-left corner, where it bends and goes right up to the segment $t_i$, which is defined in Lemma~\ref{lem:box-representation}. Then $P(i)$ follows along $t_i$ down to the bottom side of $R(i)$, where it bends and goes to the right up to the bottom-right corner and then up to the top-right corner. Afterwards, every path $P(i)$ is shortened at both ends by some amount small enough that no edge-intersection is lost. See Fig.~\ref{fig:pla-up-snake} for an illustration. Note that if $R(j)$ touches the top side of $R(i)$, then the paths $P(i)$ and $P(j)$ do have an edge-intersection because $t_i$ lies to the right of $t_j$.
 
 \begin{figure}[htb]
  \psfrag{>}[bc][bl]{$\leadsto$}
  \psfrag{ti}[cc][bc]{$t_i$}
  \psfrag{Pi}[cc][bc]{$P(i)$}
  \centering
  \includegraphics{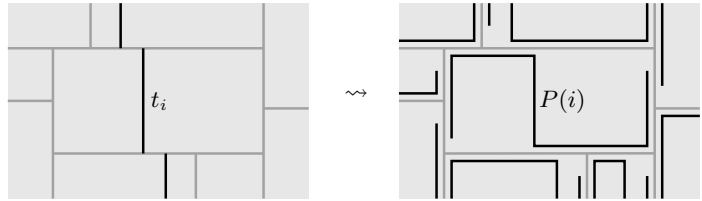}
  \caption{The $4$-bend snake-like path $P(i)$ within the rectangle $R(i)$ corresponding to $i$.}
  \label{fig:pla-up-snake}
 \end{figure}

 It is not difficult to see, that these snake-like paths form a $4$-bend representation of $G' \backslash \{u,v,w\}$. The remaining edges in $G'$, i.e., those incident to $u$, $v$ or $w$, are established by extending the paths corresponding to the neighbors of $u$, $v$ and $w$ in $G'$. What follows is illustrated with an example in Fig.~\ref{fig:planar-insert}. Let $x$ be the unique\footnote{There is no second such vertex, since $G'$ has no separating triangles.} neighbor of $u$ and $v$ in $G$. For every neighbor $i\neq x$ of $v$ the upper horizontal segment of $P(i)$ is shifted up onto the segment $s_v$ (extending the left and the middle vertical segment of $P(i)$). Similarly, for every neighbor $i\neq x$ of $u$ the leftmost vertical segment of $P(i)$ is shifted to the left onto the segment $s_u$ (extending the upper horizontal segment of $P(i)$). For the case $i=x$, the upper horizontal segment of $P(x)$ is shifted up onto some horizontal line through $s_{uv}$. Then the leftmost vertical segment of $P(x)$ is shifted to the left onto the segment $s_{uv}$, and shortened such that it is completely contained in $s_{uv}$. The paths of neighbors of $w$ are extended according to the type of the region for $\Delta$. If the region is of type~(i), the rightmost vertical segment of $P(i)$ for every neighbor $i$ of $w$ is shifted to the right onto $s_w$. If the region is of type~(ii), the lower horizontal segment of $P(i)$ is shifted down onto $s_w$. Let us again refer to Fig.~\ref{fig:planar-insert} for an illustration.

 \begin{figure}[htb]
  \psfrag{u}[cc][bl]{$s_u$}
  \psfrag{v}[cc][bl]{$s_v$}
  \psfrag{w}[cc][bl]{$s_w$}
  \psfrag{x}[cc][bl]{$P(x)$}
  \psfrag{y}[cc][bl]{$P(y)$}
  \psfrag{z}[cc][bl]{$P(z)$}
  \psfrag{uv}[cc][bl]{$s_{uv}$}
  \psfrag{A}[cc][bl]{\textbf{type (i)}}
  \psfrag{B}[cc][bl]{\textbf{type (ii)}}
  \centering
  \includegraphics{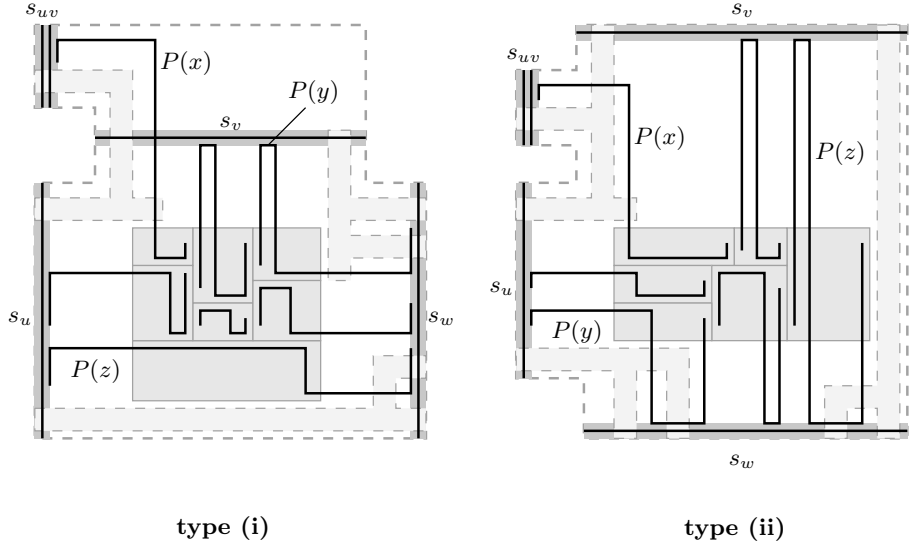}
  \caption{Examples of the $4$-bend representation $\Gamma$ contained in the region for $\Delta$. The new regions of the three facial triangles that contain two vertices from $\Delta$ are highlighted.}
  \label{fig:planar-insert}
 \end{figure}

 Including the already given paths $P(u)$, $P(v)$, and $P(w)$ gives a $4$-bend representation $\Gamma$ of $G$. It is easy to see that $\Gamma$ is indeed representing the graph $G$ and that the above construction can be done in linear time. 

It remains to show that $\Gamma$ satisfies invariant $I$. We have to identify a region for every inner facial triangle of $G$, such that each region lies within the region for $\Delta$ and all regions are pairwise disjoint. Therefore let $\Delta' = \{u',v',w'\}$ be such an inner facial triangle. First, assume that none of $\{u',v',w'\}$ is a vertex of $\Delta$. This is,  $\Delta'$ consists of inner vertices only. Then the intersection $R(u') \cap R(v') \cap R(w')$ of the corresponding rectangles is a single point in the region for $\Delta$. Moreover, exactly one of the three rectangles has its bottom-left or top-right corner at this point. Hence, the paths $P(u')$, $P(v')$ and $P(w')$ locally look like in one of the cases in Fig.~\ref{fig:pla-up-inner}. In the figure, for each case a region for $\Delta' = \{u',v',w'\}$ is highlighted. Note that these regions can be chosen to be pairwise disjoint.

 \begin{figure}[htb]
  \centering
  \scriptsize
  \psfrag{u}[cc][bl]{$P(u')$}
  \psfrag{v}[cc][bl]{$P(v')$}
  \psfrag{w}[cc][bl]{$P(w')$}
  \includegraphics{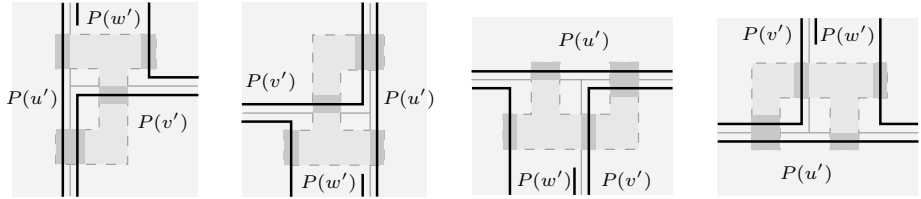}
  \caption{The four possibilities for an inner face $\Delta' = \{u',v',w'\}$ of $G$ with at most one vertex from $\Delta$. The region for $\Delta'$ is highlighted.}
  \label{fig:pla-up-inner}
 \end{figure}

 Similarly, if exactly one of $\{u,v,w\}$ is a vertex in $\Delta'$, then there is a point on the corresponding segment ($s_u$, $s_v$ or $s_w$) where the paths $P(u')$, $P(v')$ and $P(w')$ locally look like in the previous case. Hence in this case we find a region for $\Delta'$, too.

 Finally, exactly three inner facial triangles in $G$ contain exactly two outer vertices. The regions for these three faces are defined as highlighted in Fig.~\ref{fig:planar-insert}. More formally, the region for the triangle $\{u,v,x\}$ contains the segments $s'_u$, $s'_v$, $s'_x$, and $s'_{uv}$ which are contained in $s_u$, $s_v$, the middle vertical segment of $P(x)$, and $s_{uv}$, respectively. The region for the triangle $\{u,w,y\}$ contains the segments $s'_u$, $s'_w$, $s'_y$, and $s'_{wy}$, which are contained in $s_u$, $s_w$, the lower horizontal (type~(i)) or the middle vertical (type (ii)) segment of $P(y)$, and $P(y)\cap s_w$, respectively. Finally, the region for the triangle $\{v,w,z\}$ contains the segments $s'_v$, $s'_w$, $s'_z$, and $s'_{wz}$, which are contained in $s_v$, $s_w$, the lower horizontal (type~(i)) or the rightmost vertical (type (ii)) segment of $P(z)$, and $P(z)\cap s_w$, respectively.

 All these regions are pairwise disjoint and completely contained in the region for $\Delta$. We remark that even in case\footnote{This happens if and only if $G = K_4$.}
$x = y = z$ the above definitions of $4$-bend paths and regions meet the required properties, see~Fig.~\ref{fig:pla-xyz}.

\begin{figure}[htb]
 \psfrag{u}[cc][bl]{$s_u$}
 \psfrag{v}[cc][bl]{$s_v$}
 \psfrag{w}[cc][bl]{$s_w$}
 \psfrag{uv}[cc][bl]{$s_{uv}$}
 \psfrag{x}[cc][bl]{$P(x)$}
 \psfrag{A}[cc][bl]{\textbf{type (i)}}
 \psfrag{B}[cc][bl]{\textbf{type (ii)}}
 \centering
 \includegraphics{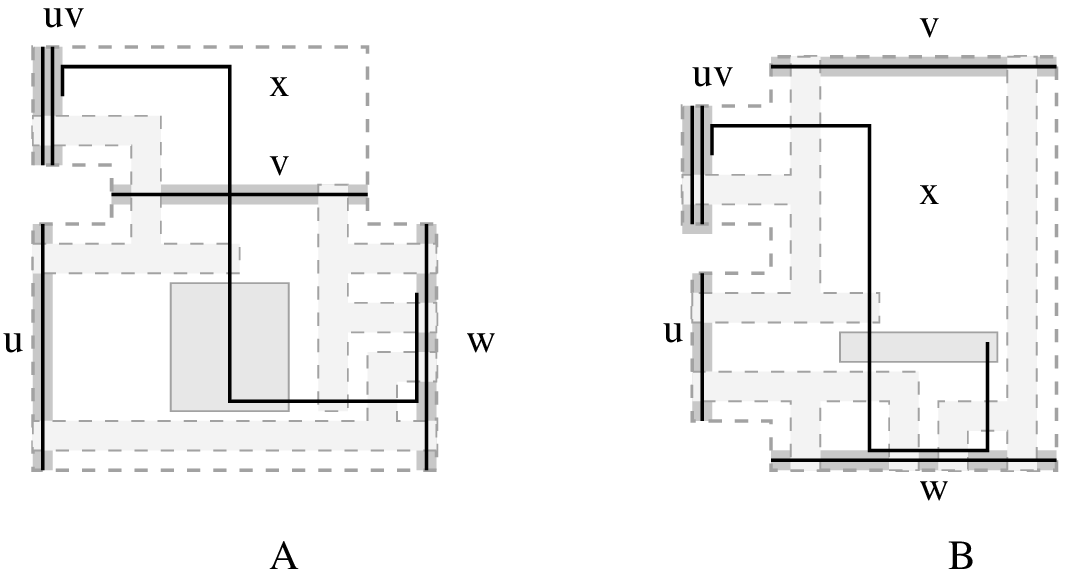}
 \caption{The case $x=y=z$ for a type (i) region and a type (ii) region in the proof of Lemma~\ref{lem:planar-step5}.}
 \label{fig:pla-xyz}
\end{figure}

 Since $G$ does not contain separating triangles, $x$, $y$, and $z$ either all coincide or are pairwise distinct, which completes the proof of the lemma.
\end{proof}

Lemma~\ref{lem:planar-step5} gives a construction as required in step 5.) of our algorithm. Hence, we have proved our main theorem.

\begin{theorem}\label{bendThm:pla-up}
 The bend-number of a planar graph is at most $4$. Moreover, a $4$-bend representation can be found in linear time.
\end{theorem}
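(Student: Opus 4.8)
The plan is to assemble the preceding lemmas along the decomposition of a plane triangulation into $4$-connected pieces, following exactly the five algorithmic steps outlined before Lemma~\ref{lem:box-representation}. First I would reduce to the triangulated case (step 1): given an arbitrary planar graph, fix a plane embedding and insert one new vertex into every face to obtain a triangulation $G$. Since deleting paths from a valid EPG representation can only destroy intersections, a $4$-bend representation of $G$ restricts to a $4$-bend representation of the original graph once the paths of the added vertices are removed. Hence it suffices to produce a $4$-bend representation of the triangulation $G$, and moreover one satisfying invariant~$I$ on every inner face, so that the iteration has something to hook onto.

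For the base case (step 2) I would take the explicit $4$-bend representation of the outer triangle shown in Fig.~\ref{fig:planar-base}, which satisfies invariant~$I$ with a type~(i) region. The core of the argument is the iteration of steps 4 and 5. Here I would invoke the folklore fact that every plane triangulation is obtained from a triangle by repeatedly glueing $4$-connected triangulations into inner faces; equivalently, the separating triangles of $G$ are nested, and cutting along them decomposes $G$ into $4$-connected pieces $G_\Delta$, one for each triangle $\Delta$ that is a face of the current partial graph $G'$ but not of $G$. Maintaining the inductive hypothesis that the current representation $\Gamma'$ satisfies invariant~$I$ on every inner face, I would pick such a $\Delta$, form the $4$-connected triangulation $G_\Delta$, and apply Lemma~\ref{lem:planar-step5} to extend $\Gamma'$ to a $4$-bend representation of $G' \cup G_\Delta$ that again satisfies invariant~$I$ on all inner faces, with every new path and every new region contained inside the region for $\Delta$. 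Each such application strictly enlarges the represented induced subgraph $G'$, so repeating until $G' = G$ terminates and produces the desired representation.

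The correctness of the iteration rests on two observations that I would highlight. First, because Lemma~\ref{lem:planar-step5} confines all newly created paths and all new regions to the region for $\Delta$, and because the regions associated with distinct current faces are pairwise disjoint, paths built inside different faces never share a grid edge; thus no spurious adjacency is created and the recursion into distinct faces is completely independent. Second, and this is where the bound $4$ is genuinely used, every path is \emph{created only once}, namely as the snake-like $4$-bend path inside its rectangle in the piece $G_\Delta$ in which its vertex is an inner vertex, and the only later modifications are the docking extensions of Lemma~\ref{lem:planar-step5}, which merely lengthen existing segments without introducing a new bend. Whenever a vertex $v$ subsequently appears as a corner of a deeper separating triangle, its path serves only as a passive supplier of a displaying sub-segment $s_v$, and invariant~$I$ guarantees that each face bordering $v$ owns a disjoint such sub-segment. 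Consequently no path ever exceeds four bends.

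The main obstacle has, by design, already been discharged inside Lemma~\ref{lem:planar-step5}: packing a $4$-connected triangulation into a prescribed region of type~(i) or~(ii) (see Fig.~\ref{fig:planar-invariant}) while respecting invariant~$I$ and staying within the $4$-bend budget is the technical heart. What remains at the level of the theorem is the bookkeeping, verifying that the nested region structure keeps the glueing steps noninterfering and that the bend count never accumulates, together with the linear-time claim. For the latter I would note that the separating triangles can be found in linear time, that each piece $G_\Delta$ can be extracted in time $\mathcal{O}(|V(G_\Delta)|)$, and that the pieces overlap only in their outer triangles; since a planar graph has only linearly many triangles, and hence only linearly many separating triangles, the total size $\sum_\Delta |V(G_\Delta)|$ is $\mathcal{O}(|V(G)|)$. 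As each application of Lemma~\ref{lem:planar-step5} runs in time linear in its piece, the whole algorithm runs in linear time, which establishes both assertions of the theorem.
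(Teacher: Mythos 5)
Your proposal is correct and follows essentially the same route as the paper: reduce to a triangulation by adding a vertex in each face, start from the base representation of the outer triangle satisfying invariant~$I$, repeatedly apply Lemma~\ref{lem:planar-step5} to the $4$-connected pieces $G_\Delta$ hanging inside inner faces (using disjointness of the regions to keep the insertions independent), and account for linear time via the separating triangles. This is precisely how the paper concludes the theorem --- it observes that Lemma~\ref{lem:planar-step5} realizes step 5.) of the algorithm and that the whole procedure runs in linear time --- so nothing further is needed.
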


Putting Theorem~\ref{bendThm:pla-up} and Proposition~\ref{bendLem:stw3-low} together we have shown the following.

\begin{theorem}\label{thm:planar}
 In an EPG representation of a planar graph $4$-bend paths are always sufficient and $3$-bend paths are sometimes necessary, i.e., the maximum bend-number among all planar graphs is $3$ or $4$.
\end{theorem}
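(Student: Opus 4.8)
The plan is to derive this statement directly from the two main results already established, so the proof is essentially bookkeeping rather than a fresh construction. First I would recall that Theorem~\ref{bendThm:pla-up} asserts $b(G) \le 4$ for every planar graph $G$. This is exactly the assertion that $4$-bend paths are always sufficient, and it immediately shows that the maximum bend-number over the class of planar graphs is at most $4$.

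For the matching lower bound I would invoke Proposition~\ref{bendLem:stw3-low}, which exhibits a concrete planar graph $G$ (of treewidth $3$) with $b(G) = 3$. In particular $b(G) > 2$, so two bends do not suffice for this graph, and hence $3$-bend paths are sometimes necessary. Consequently the maximum bend-number over all planar graphs is at least $3$. Combining the two inequalities, the maximum bend-number $M := \max\{\, b(G) : G \text{ planar} \,\}$ satisfies $3 \le M \le 4$, so $M \in \{3,4\}$, which is precisely the claimed statement.

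Since the substantive work has been carried out in Theorem~\ref{bendThm:pla-up} and Proposition~\ref{bendLem:stw3-low}, there is no real technical obstacle in assembling the present theorem; the only delicate point is that the two bounds do not coincide. Neither result determines whether some planar graph actually forces a fourth bend, and closing this one-unit gap is not required here because the theorem only claims the two-valued range $\{3,4\}$. I would therefore phrase the conclusion to make explicit that the exact value remains undetermined, flagging it as the natural open problem left by the paper rather than suggesting it has been resolved.
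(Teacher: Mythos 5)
Your proposal is correct and matches the paper's own argument exactly: the paper also obtains Theorem~\ref{thm:planar} by simply combining the upper bound of Theorem~\ref{bendThm:pla-up} with the lower bound from Proposition~\ref{bendLem:stw3-low}. Your additional remark that the gap between $3$ and $4$ remains open is likewise consistent with the paper, which states this as Conjecture~\ref{bendConj:pla-4}.
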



\section{Conclusions}
Although we could raise the previously known lower and upper bound, it remains open to determine the maximum bend-number of planar graphs. 

\begin{conjecture}\label{bendConj:pla-4}
 There is a planar graph $G$, such that every EPG representation of $G$ contains at least one path with four bends.
\end{conjecture}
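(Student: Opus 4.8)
The plan is to lift the lower-bound argument of Proposition~\ref{bendLem:stw3-low} by exactly one level. There a bend-number-$3$ graph is obtained by nesting copies of $K_{2,50}$ (whose $2$-bend representations are essentially rigid) and suspending a small rigid gadget $H$ at the bottom; assuming a $2$-bend representation forces a rectangular corridor into which $H$ cannot be drawn. To force a fourth bend I would assume, for contradiction, a $3$-bend representation of a carefully designed planar graph $G$ and run the same three-stage scheme: (1) a rigidity lemma that pins two distinguished vertices into parallel, facing segments; (2) repeated nesting to manufacture an arbitrarily constrained corridor; and (3) a final planar gadget whose realization inside that corridor would require some path to turn four times. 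One structural constraint guides the whole construction: by Theorem~\ref{bendThm:stw3-up} every planar graph of treewidth at most $3$ is already a $3$-bend graph, so any witness for the conjecture must have treewidth at least $4$. Consequently the construction cannot be a treewidth-$3$ object like the graph in Fig.~\ref{fig:stw3-low}; the nesting and the final gadget must genuinely exploit width-$4$ structure.

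The key missing ingredient is a $3$-bend analogue of the $K_{2,n}$ rigidity used in the proposition. I would start from $K_{2,N}$ with $N$ enormous together with a hypothetical $3$-bend representation. Each common neighbor $w$ of $u$ and $v$ meets one of the four segments of $P(u)$ and one of the four segments of $P(v)$, so there are at most $16$ \emph{segment-pair types}; by pigeonhole at least $N/16$ of the common neighbors share a single type. After discarding the boundedly many vertices that use a segment endpoint of $P(u)$ or $P(v)$ --- in the spirit of the Biedl--Stern bound that at most $12$ vertices of the large class behave degenerately --- the remaining common neighbors all connect \emph{the same} segment of $u$ to \emph{the same} segment of $v$ by a $3$-bend path, which should force $u$ and $v$ into a rigid parallel configuration along those two segments. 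Establishing this parallelism, by ruling out the extra flexibility a four-segment path has compared to a $2$-bend path (for instance $S$-shaped detours that bridge the two segments while wandering sideways and thereby escape the clean corridor), is the heart of the argument.

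With such a forcing lemma in hand the nesting step is essentially as in Proposition~\ref{bendLem:stw3-low}: inside the forced parallel corridor between $u$ and $v$ I place a fresh copy of the gadget, whose two apex vertices are again forced parallel, and iterate a constant number of times, so that a deeply nested pair of paths is confined to an arbitrarily thin, straight corridor and has already spent most of its bend budget. The final step is to design the suspended gadget $H'$ --- the width-$4$ analogue of $H$ --- so that completing its edges inside this corridor, using only the one or two bends the confined paths have left, is impossible without either creating an edge-intersection between two non-adjacent paths or introducing a fourth bend. Either outcome contradicts the assumed $3$-bend representation, giving $b(G)\geq 4$.

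The main obstacle is precisely the $3$-bend rigidity lemma. A $2$-bend path realizing an edge of $K_{2,n}$ is almost forced to be a straight vertical rung between two horizontal rails, which is what makes the corridor in Proposition~\ref{bendLem:stw3-low} rigid; a $3$-bend path has a spare segment and can in principle join the two rails while still moving sideways, so the parallel structure is no longer automatic. Quantifying and eliminating this extra freedom --- presumably by iterating the pigeonhole over segment-pair types together with a careful accounting of how many paths can simultaneously exploit a detour before two of them are forced to share a private grid edge --- is where the real work lies, and is almost certainly the reason the statement appears here only as a conjecture.
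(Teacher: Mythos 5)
First, a point of order: the statement you are proving is Conjecture~\ref{bendConj:pla-4}, which the paper explicitly leaves \emph{open} --- there is no proof in the paper to compare against, and (as of this paper) nobody knows whether the answer is $3$ or $4$. So the only question is whether your proposal itself constitutes a proof, and it does not: it is a research program whose two central ingredients are named but not supplied, as you yourself concede. The first missing ingredient is the ``$3$-bend rigidity lemma.'' The pigeonhole over the $16$ segment-pair types only yields many common neighbors attaching to one fixed segment of $P(u)$ and one fixed segment of $P(v)$; it does not make those two segments parallel (a $3$-bend path can join a horizontal segment of $P(u)$ to a vertical segment of $P(v)$, and such perpendicular pairs are among your $16$ types), it does not confine the neighbors' two free middle segments, and it does not produce an empty straight corridor. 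The $2$-bend argument in Proposition~\ref{bendLem:stw3-low} does not follow from pigeonhole alone either: it leans on the quantitative Biedl--Stern lemma (at most $12$ vertices of the large class use the same or consecutive segments of their own path) together with the bound on segment endpoints of $P(u)$ and $P(v)$, and no analogue of that lemma is known for $3$-bend paths --- indeed the ``same or consecutive segments'' cases are exactly where the extra segment of a $3$-bend path creates new configurations (e.g., segments $1$ and $4$ are non-consecutive yet perpendicular, and an $S$-shaped path can bridge the two rails while drifting sideways). Asserting that these detours ``should'' be eliminable by iterated pigeonhole is a statement of hope, not an argument.

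The second gap is that the terminal gadget $H'$ does not exist in your write-up. In the paper's treewidth-$3$ lower bound, essentially all of the work is the explicit analysis of the $29$-vertex graph $H$ inside the forced corridor: the ordering of $a_1,\ldots,a_4$, the vertex $g$, the line $\ell$, the vertices $w_2,w_3$, and the final contradiction of three pairwise intersecting paths that are pairwise non-adjacent in $G$. Your plan assigns $H'$ a role but gives no graph, no case analysis, and hence no contradiction; moreover your (correct) observation that Theorem~\ref{bendThm:stw3-up} forces any witness to have treewidth at least $4$ cuts against simply transplanting the old scheme, since the nested $K_{2,N}$ scaffolding is itself of small treewidth and the width-$4$ structure must be built into $H'$ while preserving planarity of the whole construction. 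In short: the approach is a reasonable way to \emph{attack} the conjecture, and it correctly identifies where the difficulty lies, but both load-bearing steps are absent, so the conjecture remains exactly as open after your proposal as before it.
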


Another interesting class of representations is as edge-intersection graphs of arbitrary polygonal paths. It is straight-forward to obtain a $4$-bend representation from a representation of touching $\top$s and $\bot$s~\cite{deF-94}. What is the right answer?

\bibliography{lit}

\providecommand{\bysame}{\leavevmode\hbox to3em{\hrulefill}\thinspace}
\providecommand{\MR}{\relax\ifhmode\unskip\space\fi MR }
\providecommand{\MRhref}[2]{%
  \href{http://www.ams.org/mathscinet-getitem?mr=#1}{#2}
}
\providecommand{\href}[2]{#2}
\begin{thebibliography}{10}

\bibitem{Asi-09}
Andrei Asinowski and Andrew Suk, \emph{Edge intersection graphs of systems of
  paths on a grid with a bounded number of bends}, Discrete Appl. Math.
  \textbf{157} (2009), no.~14, 3174--3180.

\bibitem{Bie-10}
Therese Biedl and Michal Stern, \emph{On edge-intersection graphs of {$k$}-bend
  paths in grids}, Discrete Math. Theor. Comput. Sci. \textbf{12} (2010),
  no.~1, 1--12.

\bibitem{Bod-98}
Hans~L. Bodlaender, \emph{A partial {$k$}-arboretum of graphs with bounded
  treewidth}, Theoret. Comput. Sci. \textbf{209} (1998), no.~1-2, 1--45.

\bibitem{Bra-90}
Martin~L. Brady and Majid Sarrafzadeh, \emph{Stretching a knock-knee layout for
  multilayer wiring}, IEEE Trans. Comput. \textbf{39} (1990), no.~1, 148--151.

\bibitem{deF-94}
Hubert de~Fraysseix, Patrice~Ossona de~Mendez, and Pierre Rosenstiehl, \emph{On
  triangle contact graphs}, Combin. Probab. Comput. \textbf{3} (1994), no.~2,
  233--246.

\bibitem{Elm-90}
Ehab~S. El-Mallah and Charles~J. Colbourn, \emph{On two dual classes of planar
  graphs}, Discrete Math. \textbf{80} (1990), no.~1, 21--40.

\bibitem{Fus-07}
\'{E}ric Fusy, \emph{Combinatoire des cartes planaires et applications
  algorithmiques}, PhD Thesis, 2007.

\bibitem{Gol-08}
Martin~C. Golumbic, Marina Lipshteyn, and Michal Stern, \emph{Representing edge
  intersection graphs of paths on degree 4 trees}, Discrete Math. \textbf{308}
  (2008), no.~8, 1381--1387.

\bibitem{Gol-09}
\bysame, \emph{Edge intersection graphs of single bend paths on a grid},
  Networks \textbf{54} (2009), no.~3, 130--138.

\bibitem{Gon-05}
Daniel Gon{\c{c}}alves and Pascal Ochem, \emph{On some arboricities in planar
  graphs}, Electronic Notes in Discrete Mathematics \textbf{22} (2005), 427 --
  432, 7th International Colloquium on Graph Theory.

\bibitem{Gya-95}
Andr{\'a}s Gy{\'a}rf{\'a}s and Douglas West, \emph{Multitrack interval graphs},
  Proceedings of the {T}wenty-sixth {S}outheastern {I}nternational {C}onference
  on {C}ombinatorics, {G}raph {T}heory and {C}omputing ({B}oca {R}aton, {FL},
  1995), vol. 109, 1995, pp.~109--116.

\bibitem{Har-79}
Frank Harary and William~T. Trotter, Jr., \emph{On double and multiple interval
  graphs}, J. Graph Theory \textbf{3} (1979), no.~3, 205--211.

\bibitem{Hel-10}
Daniel Heldt, Kolja Knauer, and Torsten Ueckerdt, \emph{Edge-intersection
  graphs of grid paths -- the bend-number}, preprint, arXiv:1009.2861v1
  [math.CO], 2010.

\bibitem{ItaRod-78}
Alon Itai and Michael Rodeh, \emph{Finding a minimum circuit in a graph}, SIAM
  Journal on Computing \textbf{7} (1978), no.~4, 413--423.

\bibitem{Kos-99}
Alexandr~V. Kostochka and Douglas~B. West, \emph{Every outerplanar graph is the
  union of two interval graphs}, Proceedings of the {T}hirtieth {S}outheastern
  {I}nternational {C}onference on {C}ombinatorics, {G}raph {T}heory, and
  {C}omputing ({B}oca {R}aton, {FL}, 1999), vol. 139, 1999, pp.~5--8.

\bibitem{KozKin-85}
Krzysztof Ko\'zmi\'nski and Edwin Kinnen, \emph{Rectangular dual of planar
  graphs}, Networks \textbf{15} (1985), no.~2, 145--157.

\bibitem{LaiLei-84}
Yen-Tai Lai and Sany~M. Leinwand, \emph{An algorithm for building rectangular
  floor-plans}, 21st Design Automation Conference (DAC '84), 1984,
  pp.~663--664.

\bibitem{Rie-09}
Bernard Ries, \emph{Some properties of edge intersection graphs of single bend
  path on a grid}, Electronic Notes in Discrete Mathematics \textbf{34} (2009),
  29 -- 33, European Conference on Combinatorics, Graph Theory and Applications
  (EuroComb 2009).

\bibitem{Sch-83}
Edward~R. Scheinerman and Douglas~B. West, \emph{The interval number of a
  planar graph: three intervals suffice}, J. Combin. Theory Ser. B \textbf{35}
  (1983), no.~3, 224--239.

\bibitem{Ung-53}
Peter Ungar, \emph{On diagrams representing maps}, Journal of the London
  Mathematical Society \textbf{28} (1953), 336--342.

\end{thebibliography}
\bibliographystyle{amsplain}

%
%

%
%
%
%
%
%

%
%

%
\end{document}